\newverbcommand{\CMRverb}{\tiny\color{blue}}{}
\newverbcommand{\GRverb}{\tiny\color{teal}}{}
\newverbcommand{\STverb}{\tiny\color{cyan}}{}
\newverbcommand{\Pverb}{\tiny\color{violet}}{}
\newverbcommand{\Averb}{\tiny\color{brown}}{}
\theoremstyle{plain}
\newtheorem{THM}{Theorem}[section]
\newtheorem{PROP}[THM]{Proposition}
\newtheorem{FACT}[THM]{Fact}
\theoremstyle{definition}
\newtheorem{DEF}[THM]{Definition}
\newtheorem{RMK}[THM]{Remark}
\newtheorem{EX}[THM]{Example}
\newtheoremstyle{named}{}{}{\itshape}{}{\bfseries}{.}{.5em}{\thmnote{#3}}
\theoremstyle{named}
\newtheorem*{namedtheorem}{Theorem}
\DeclareMathOperator{\id}{Id}
\newcommand{\dfn}{\mathrel{\mathop:}=}
\newcommand{\C}{\mathcal{C}}
\newcommand{\R}{\mathbb{R}}
\renewcommand{\S}{\Sigma}
\newcommand{\Z}{\mathbb{Z}}
\newcommand{\V}{\mathcal{V}}
\newcommand{\PMap}{\operatorname{PMap}}
\newcommand{\Mp}{\operatorname{Map}}
\newcommand{\Homeo}{\operatorname{Homeo}}
\let\oldphi\phi 
\let\phi\varphi 
\let\varphi\oldphi
\title[CB Classification of $\PMap(\S)$]{Large-Scale Geometry of Pure Mapping Class Groups of Infinite-Type Surfaces}
\author[Hill]{Thomas Hill}
\begin{document}
\maketitle

\begin{abstract}
    
    The work of Mann and Rafi gives a classification surfaces $\S$ when $\Mp(\S)$ is globally CB, locally CB, and CB generated under the technical assumption of \emph{tameness}.  In this article, we restrict our study to the pure mapping class group and give a complete classification without additional assumptions.  In stark contrast with the rich class of examples of Mann--Rafi, we prove that $\PMap(\S)$ is globally CB if and only if $\S$ is the Loch Ness monster surface, and locally CB or CB generated if and only if $\S$ has finitely many ends and is not a Loch Ness monster surface with (nonzero) punctures.  
\end{abstract}

\section{Introduction}

The mapping class group of a surface $\S$ is the group of orientation-preserving homeomorphisms up to isotopy and is denoted by $\Mp(\S)$.  The subgroup that fixes the ends of the surface pointwise is called the pure mapping class group and is denoted by $\PMap(\S)$.  These groups form a short exact sequence with the set of homeomorphisms of the end space:

 \begin{equation*} 
    1 \to \PMap(\S) \to \Mp(\S) \to \Homeo(E(\S), E_G(\S)) \to 1 
    \end{equation*} 
    
From this sequence, we see that understanding $\Mp(\S)$ involves understanding two components:  $\Homeo(E(\S), E_G(\S))$ thought of as the group of ``external symmetries'' of the surface, and $\PMap(\S)$ the group of ``internal symmetries''.  
The set of ends is homeomorphic to a subset of a Cantor set, rendering $\Homeo(E(\S), E_G(\S))$ a highly complex object.
On the other hand, there are many well-established results concerning pure mapping class groups of infinite-type surfaces (see \cite{aramayona2020first},\cite{domat2022big},\cite{patel2018algebraic}).

This article aims to describe the large-scale geometry of pure mapping class groups of infinite-type surfaces. 
Mapping class groups of \emph{finite}-type surfaces have straightforward large-scale geometry. 
The Dehn-Lickorish theorem  \cite[see Chapter 4]{primer} establishes that the (pure) mapping class group of a finite-type surface is finitely generated. 
This finite generating set induces a word metric on $\Mp(\S)$. 
Similarly, a different set of generators induces a (possibly) different word metric on $\Mp(\S)$.
The finiteness of both generating sets allows one to establish a quasi-isometry between these two metric spaces, thereby showing that $\Mp(\S)$ possesses a well-defined \emph{quasi-isometry type}.

Characterizing the large-scale geometry of big mapping class groups, specifically identifying whether they possess a well-defined quasi-isometry type and determining that type, presents a significantly greater challenge compared to mapping class groups of surfaces with finite type. 
This disparity arises because the conventional techniques of geometric group theory, applicable to finitely generated groups, extend to locally compact, compactly generated topological groups, but big mapping class groups do not fall into this category.
Fortunately, Rosendal's work \cite{rosendal2021coarse} provides a framework to extend the tools of geometric group theory to big mapping class groups and, more generally, to Polish groups. This framework replaces the compactness conditions with the notion of \emph{locally coarsely bounded} (abbreviated locally CB) and \emph{CB-generated} (i.e., generated by a coarsely bounded set).  

Under the additional assumption of tameness, Mann and Rafi described the large-scale geometry of the full mapping class group by classifying which surfaces have a CB, locally CB, or CB-generated mapping class group \cite{mann2019large}. Within their classification, there are infinitely many surfaces that have a CB mapping class group, and an even larger class of surfaces have a locally CB and CB-generated mapping class group.  In contrast, we show that a relatively small class of surfaces have a CB, locally CB, or CB-generated \emph{pure} mapping class group.   Furthermore, by restricting to the pure mapping class group, we give a CB classification of $\PMap(\S)$ without the additional tameness condition: 

\begin{THM}
\label{thm:mainthm}
Let $\S$ be an infinite-type surface.  Then $\PMap(\S)$ is 
\begin{itemize}
    \item[\emph{(a)}] globally CB if and only if $\S$ is the Loch Ness monster surface;
    \item[\emph{(b)}] locally CB if and only if $|E(\S)| < \infty$ and $\S$ is not a Loch Ness monster surface with (nonzero) punctures;
    \item[\emph{(c)}] CB-generated if and only if $\PMap(\S)$ is locally CB. 
\end{itemize}
\end{THM}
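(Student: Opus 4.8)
The plan is to prove the three parts largely in tandem, building on the Rosendal framework and known structural facts about $\PMap(\S)$ (in particular the Alexander method, the change-of-coordinates principle, and the results of Patel--Vokes, Aramayona--Patel--Vokes, and Domat--et al.\ on topological generation of $\PMap(\S)$ by compactly supported mapping classes together with handle shifts). For part (a), the forward direction—if $\PMap(\S)$ is globally CB then $\S$ is Loch Ness—should follow by contraposition: whenever $\S$ has more than one end, or exactly one end of infinite genus but also nonplanar/punctured structure forcing extra ends, one produces an unbounded length function or a coarsely proper action (e.g.\ on an arc graph, or via a surjection to $\Z$ coming from a handle shift or a separating curve) witnessing non-CB-ness. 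The reverse direction—the Loch Ness monster has globally CB pure mapping class group—is the technical heart: I would fix the standard exhaustion of the Loch Ness monster by finite-type subsurfaces $S_1 \subset S_2 \subset \cdots$ each with one boundary component, show that the set of mapping classes supported on $S_1$ together with a single handle shift is coarsely bounded and generates, and then use a ``displacement'' or commutator trick (analogous to Mann--Rafi's CB-ness arguments, and to the fragmentation arguments in the literature) to absorb arbitrary elements into a bounded product.

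For part (b), the dividing line is $|E(\S)| < \infty$: the strategy is to show that if the end space is finite then $\PMap(\S)$ admits a CB neighborhood of the identity—namely the (pointwise) stabilizer of a suitable finite-type subsurface whose complement is a finite disjoint union of one-ended pieces (Loch Ness monsters, planar ends, or punctured-disk ends), together with the observation that each such stabilizer factors through a product of groups handled by part (a)-type arguments plus finitely many $\Z$'s from handle shifts between infinite-genus ends. Conversely, if $|E(\S)| = \infty$, the end space contains a convergent sequence of ends, and one exhibits a continuum of ``independent'' handle shifts or half-twists showing that no identity neighborhood can be coarsely bounded (e.g.\ by constructing, for any candidate CB set, an element moving it unboundedly in an appropriate pseudometric). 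The excluded case—Loch Ness monster with nonzero punctures—must be handled separately: there one has one end accumulated by genus but also isolated planar punctures, and I expect the relevant obstruction is that the punctures can be permuted-at-infinity in a way... wait, punctures are fixed pointwise by $\PMap$, so rather the obstruction is a $\Z^{\infty}$ or an unbounded action coming from ``pushing'' genus past the punctures; I would make this precise via a length function counting how a mapping class drags a fixed arc from the puncture region out to infinity.

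Part (c) is then a formal consequence combined with one direction of (b): CB-generated always implies locally CB is false in general, but here the claim is the equivalence, so I would show CB-generated $\Rightarrow$ locally CB by the contrapositive using the $|E(\S)| = \infty$ (or punctured-Loch-Ness) obstruction again—any CB generating set would in particular be a CB identity neighborhood after one step, forcing local CB—and locally CB $\Rightarrow$ CB-generated by exhibiting an explicit CB generating set in each surviving case (compactly supported classes on a large enough finite-type subsurface, plus the finitely many handle shifts permitted by finiteness of the end space), invoking the topological generation results to see this set generates.

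\medskip

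The main obstacle I anticipate is the reverse direction of part (a): proving global CB-ness of $\PMap$ of the Loch Ness monster. Coarse boundedness of an entire non-compactly-generated Polish group is a strong statement (it means every continuous isometric action on a metric space has bounded orbits, equivalently every compatible left-invariant metric is bounded), and the proof cannot merely exhibit a bounded generating set—one must control \emph{arbitrarily long} words. The delicate point is the interaction between the infinitely many handles: one needs a uniform bound on how many ``applications of the generating set'' suffice to write any element, which I expect requires a careful fragmentation/absorption argument showing that an arbitrary compactly supported class, no matter how complicated, can be conjugated by bounded-support elements and handle shifts into a product of a bounded number of pieces each supported in the fixed subsurface $S_1$. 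Getting the commutator/displacement estimates to be genuinely uniform (independent of the size of the support of the element being absorbed) is the crux, and is presumably where the special geometry of the Loch Ness monster—one end, infinite genus, no punctures, ``self-similar'' under handle shifts—is essential and where the general (non-tame) case would otherwise fail.
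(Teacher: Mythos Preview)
Your proposal has the right overall shape but misses the paper's key simplifying moves, and in one place contains an outright error.

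First, the error: you write that ``CB-generated always implies locally CB is false in general.'' This is wrong for Polish groups; Rosendal's Theorem~1.2 gives exactly this implication, and the paper simply cites it. Your contrapositive workaround is therefore unnecessary.

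Second, and most importantly, you identify the reverse direction of (a)---that $\PMap$ of the Loch Ness monster is globally CB---as the ``technical heart'' and propose a fragmentation/absorption argument with uniform commutator estimates. This is a significant over-complication. The Loch Ness monster has a single end, so $\PMap(\Sigma) = \Mp(\Sigma)$, and the end space is trivially self-similar; hence global CB-ness follows immediately from Mann--Rafi's classification (their Theorem~1.7). No new argument is needed. The same observation handles the once-punctured Loch Ness monster (two ends of distinct type, so again $\PMap = \Mp$).

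Third, you do not exploit the finite-index transfer. When $|E(\Sigma)| < \infty$, $\PMap(\Sigma)$ is a finite-index open Polish subgroup of $\Mp(\Sigma)$, so by Rosendal's coarse-embedding fact, CB-ness and local CB-ness transfer between them. The paper uses this systematically in part (b): rather than building stabilizer-of-subsurface arguments from scratch for $\PMap$, it checks Mann--Rafi's local-CB criterion for $\Mp$ and transfers. In particular, your proposed obstruction for the punctured Loch Ness (``pushing genus past the punctures,'' an unbounded length function) is vaguer than necessary; the paper simply shows that condition~(3) of Mann--Rafi's local-CB theorem fails because any neighborhood $V$ of the non-planar end has $\Sigma \setminus V$ of positive genus while $K$ has genus zero, so no $f$ can carry $V$ onto the complement of $K$.

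Finally, the paper's organizing tool for the non-CB directions is a single proposition: if $|E(\Sigma)| \ge 3$ then $\Sigma$ contains a nondisplaceable finite-type subsurface with respect to $\PMap(\Sigma)$, and nondisplaceable subsurfaces yield unbounded length functions (Mann--Rafi's argument, restricted to $\PMap$). This handles (a) for all but three surfaces at once, and feeds directly into (b) for $|E(\Sigma)| = \infty$: any finite-type $K$ leaves a complementary component with infinitely many ends, hence a nondisplaceable subsurface in $\Sigma \setminus K$, hence $\mathcal{V}_K$ is not CB. Your sketch gestures at length functions and arc-graph actions case by case, and for infinite end space proposes ``a continuum of independent handle shifts or half-twists''; the nondisplaceable-subsurface route is both cleaner and uniform.
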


Many of the results of Mann--Rafi restrict nicely to the pure mapping class group and serve as tools to prove \Cref{thm:mainthm}, including using nondisplaceable subsurfaces to disprove the CB-ness of the mapping class group.  However, a fundamental difference in the setting of $\PMap(\S)$ is that a surface with more than three (and possibly infinitely many) ends always contains a nondisplaceable subsurface:

\begin{PROP}
    \label{lem:threeEnds}
    Let $\S$ be an infinite-type surface, possibly with an infinite end space. If $|E(\S)| \ge 3$ then $\S$ contains a nondisplaceable subsurface with respect to $\PMap(\S).$
\end{PROP}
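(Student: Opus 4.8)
The plan is to construct a single connected compact subsurface $K \subseteq \S$ whose complementary regions distribute the ends of $\S$ into at least three nonempty clopen pieces, and then use the fact that a pure mapping class acts as the identity on $E(\S)$ — so it cannot rearrange those pieces — to see that $K$ cannot be pushed off itself.

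\textbf{Step 1: building $K$.} Fix an exhaustion $K_1 \subseteq K_2 \subseteq \cdots$ of $\S$ by connected compact subsurfaces with no compact complementary components (such exhaustions always exist). Since $E(\S)$ is the inverse limit of the finite sets $\pi_0(\S \setminus K_m)$ and $|E(\S)| \ge 3$, for all sufficiently large $n$ the set $\pi_0(\S \setminus K_n)$ has at least three elements. Fix such an $n$, set $K \dfn K_n$, and write $\S \setminus K = V_1 \sqcup \cdots \sqcup V_k$ with $k \ge 3$, each $V_j$ noncompact. Let $A_j \dfn E(V_j) \subseteq E(\S)$. Each $A_j$ is nonempty and clopen, and $\{A_1, \dots, A_k\}$ is a partition of $E(\S)$: every end of $\S$ eventually escapes the compact set $K$, hence is represented by a chain lying in exactly one $V_j$. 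In particular $K$ is neither a disk nor an annulus.

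\textbf{Step 2: nondisplaceability.} Suppose some representative homeomorphism $f$ of some $g \in \PMap(\S)$ satisfies $f(K) \cap K = \emptyset$; note $f$ again fixes $E(\S)$ pointwise. Since $f(K)$ is connected and disjoint from $K$, we have $f(K) \subseteq V_i$ for a single index $i$. Because $f$ is a homeomorphism, $\S \setminus f(K) = f(\S \setminus K)$ has exactly $k$ components, namely $f(V_1), \dots, f(V_k)$, and the set of ends of $\S$ lying in $f(V_j)$ is $f_*(A_j) = A_j$, using that $f$ acts trivially on $E(\S)$. Let $C$ be the component of $\S \setminus f(K)$ containing $K$. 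For every $j \ne i$ the region $V_j$ is disjoint from $f(K) \subseteq V_i$ and shares a boundary curve with $K$, so $V_j \subseteq C$; hence $C \supseteq K \cup \bigcup_{j \ne i} V_j$ and therefore $E(C) \supseteq \bigcup_{j \ne i} A_j$. On the other hand $C = f(V_l)$ for some $l$, so $E(C) = A_l$, giving $A_l \supseteq \bigcup_{j \ne i} A_j$. Since $k \ge 3$ there are at least two indices different from $i$ and the $A_j$ are nonempty and pairwise disjoint, so this forces $l = i$; but then $A_i \supseteq A_m$ for some $m \ne i$ with $A_i \cap A_m = \emptyset$, contradicting $A_m \ne \emptyset$. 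Hence no such $f$ exists and $K$ is nondisplaceable with respect to $\PMap(\S)$.

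\textbf{Main obstacle.} The only place purity enters is the identity $E(f(V_j)) = A_j$, which says precisely that a pure homeomorphism cannot permute the ends parceled out among the complementary regions of $K$; the remainder is bookkeeping about complementary components and their end sets. Accordingly, the point needing the most care is making that end-space bookkeeping precise — defining the clopen partition of $E(\S)$ determined by $\S \setminus K$, checking it is preserved by pure homeomorphisms, and confirming that the inequality $k \ge 3$ really does force the contradiction (it would not for $k \le 2$, matching the sharpness of the hypothesis $|E(\S)| \ge 3$).
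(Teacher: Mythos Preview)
Your proof is correct and follows essentially the same approach as the paper: build a compact $K$ whose complement partitions $E(\S)$ into at least three nonempty clopen pieces, then use that a pure mapping class fixes each piece setwise to derive a contradiction from $f(K)\cap K=\emptyset$. The only cosmetic difference is that you obtain $K$ from a compact exhaustion (yielding $k\ge 3$ complementary components) whereas the paper constructs $K$ by hand with exactly three complementary components via separating curves; the nondisplaceability argument is the same.
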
      

This proposition drastically reduces the possible candidates of surfaces with a CB pure mapping class group, and the remaining possibilities are considered case-by-case.  \Cref{lem:threeEnds} is also an important new tool used to classify locally CB and CB-generated pure mapping class groups.

\subsection*{Acknowledgements}
The author extends appreciation to Priyam Patel for her insightful project suggestion, invaluable feedback, and mentorship. The author also acknowledges Sanghoon Kwak for his extensive and insightful conversations, along with his meticulous comments and feedback on multiple drafts of the paper. Appreciation also goes to Mladen Bestvina and Rebecca Rechkin for their helpful discussions that significantly contributed to the development of this project.  The author acknowledges support from RTG DMS–1840190.

\section{Preliminaries}

\subsection{Surfaces and Mapping Class Groups}

Surfaces are 2-dimensional manifolds, and they come in two flavors: finite-type surfaces and infinite-type surfaces.  
\textbf{Finite-type surfaces} 
are those with a finitely generated fundamental group. 
These surfaces are classified by their genus $g$, number of boundary components $b$, and number of punctures $n$.   
On the other hand, those that do not have a finitely generated fundamental group are called \textbf{infinite-type surfaces}.  For infinite-type surfaces, the triple $(g, b, n)$ does not contain enough information to distinguish between distinct surfaces. For example, the Loch Ness monster surface and the ladder surface both have $(g, b, n) = (\infty, 0, 0)$, see \Cref{pic:ladder_LN}.  
The classification of infinite-type surfaces also depends on the \emph{end space} of the surface.  Roughly speaking, the end space represents the distinct ways to move toward infinity on the surface.  

\begin{figure}[ht!]
    \centering
    \includegraphics[scale=.15]{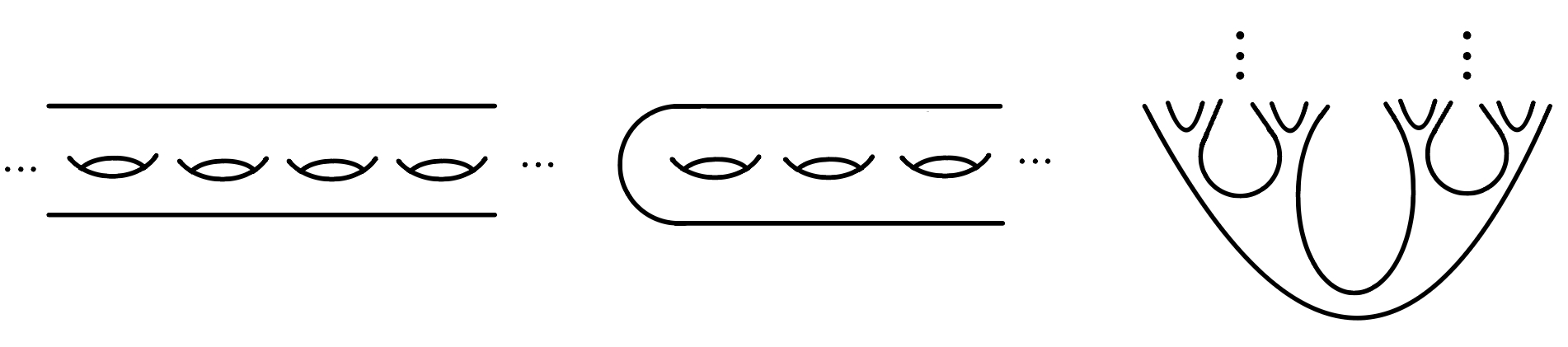}
    \caption{The ladder surface (left) has two distinct ends accumulated by genus, while the Loch Ness monster surface (center) has only one. The Cantor tree surface (right) is a sphere with a Cantor set removed and has uncountably many planar ends.  
    \label{pic:ladder_LN}
    }
\end{figure}

Put precisely, an \textbf{end} is an equivalence class of nested connected subsurfaces of $S$, denoted by $[U_1 \supset U_2 \supset \cdots ]$, such that each $U_i$ has a compact boundary, and any compact subsurface $K\subset S$ is disjoint from $U_n$ for large enough $n$.  Two sequences $U_1 \supset U_2 \supset \cdots$ and $V_1 \supset V_2 \supset \cdots$ are equivalent if they are both eventually contained in one another; that is, if for any $n$ there is an $m$ such that $U_m \subset V_n$ and $V_m \subset U_n$.  
An end $[U_1 \supset U_2 \supset \cdots ]$ is called \textbf{planar} if the $U_i$ are all eventually homeomorphic to an open subset of $\R^2$.  
If instead all $U_i$ in the sequence have infinite genus, the end is said to be \textbf{accumulated by genus} or \textbf{non-planar}.  
The collection of all ends is a topological space called the \textbf{end space} of $\S$ and is denoted by $E(\S)$. The subset of ends accumulated by genus is denoted by $E_G(\S)$.
As a topological space, the points of $E(\S)$ are ends $[U_1 \supset U_2 \supset \cdots]$, and open sets of $E(\S)$ can be thought of as subsurfaces of $\S$ with compact boundary. 
To be precise, the collection of all ends eventually contained in a subsurface $U\subset \S$
with compact boundary, forms a basis for the topology of $E(\S)$.
An end $[U_1 \supset U_2 \supset \cdots]$ is said to be contained in the open set $U$ if $U_i \subset U$ for large enough $i$. 
With the additional data of the end space, infinite-type surfaces can be completely classified by the homeomorphism type of the tuple $(g, b, E(\S), E_G(\S))$, see \cite{Richards1963OnTC}. We will only consider surfaces with empty boundary.

The group of orientation-preserving homeomorphisms up to isotopy is called the \textbf{mapping class group} and is denoted by $\Mp(\S)$. The subgroup of $\Mp(\S)$ that restricts to the identity on $E(\S)$ is called the \textbf{pure mapping class group} and is denoted by $\PMap(\S)$.    Mapping class groups of infinite-type surfaces are often called \emph{big} mapping class groups. 

\subsection{Large-Scale Geometry and Coarse Boundedness} Describing the large-scale geometry, or well-defined quasi-isometry type, of big mapping class groups is more challenging than for finite-type surfaces, as standard geometric group theory tools for finitely generated groups do not directly apply to big mapping class groups.  Rosendal's work \cite{rosendal2021coarse} introduces the notion of \emph{locally coarsely bounded} and \emph{CB-generated} groups, providing a framework to extend the tools of geometric group theory to a larger class of groups.  This subsection aims to summarize Rosendal's key contributions.

\begin{DEF} 
    \label{def:globallyCB} A subset $A$ of a topological group $G$ is \textbf{globally coarsely bounded} in $G$, or simply \textbf{CB}, if for every continuous isometric action of $G$ on a given metric space $X$, the orbit $A \cdot x$ is bounded for all $x \in X$.  
\end{DEF}

 A topological group is \textbf{Polish} if it is separable and completely metrizable. Both $\Mp(\S)$ and $\PMap(\S)$ are examples of Polish groups \cite{aramayona2020big}. For Polish groups, Rosendal provides a useful alternative criterion for coarse boundedness: 

\begin{FACT}[{\cite[Prop. 2.7 (5)]{rosendal2021coarse}}]
    \label{fact:RosCrit} A subset $A$ of a Polish group $G$ is CB if and only if for every neighborhood of the identity $\V$ there is a finite subset $\mathcal{F} \subset G$ and $k \ge 1$, so that $A \subset (\mathcal{F}\V)^k$.  
\end{FACT}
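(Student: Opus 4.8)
The plan is to prove the two implications separately, using throughout the standard dictionary between continuous isometric $G$-actions and continuous left-invariant \'ecarts (pseudometrics) on $G$. On one hand, a continuous isometric action of $G$ on a metric space $(X,d)$ together with a basepoint $x$ produces the continuous left-invariant \'ecart $\partial(g,h) = d(g\cdot x, h\cdot x)$, and the orbit $A\cdot x$ is bounded exactly when $A$ has finite $\partial$-diameter. Conversely, any continuous left-invariant \'ecart $\partial$ yields a continuous isometric action of $G$ by left translation on the metric space obtained from $G$ by identifying points at $\partial$-distance $0$ and completing. Thus $A$ is CB if and only if $A$ has finite diameter with respect to every continuous left-invariant \'ecart, and it is this reformulation that I would test against the covering criterion.

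For the easy direction (criterion $\Rightarrow$ CB), I would fix a continuous isometric action on $(X,d)$ and a point $x$, and set $\V = \{g \in G : d(g\cdot x, x) < 1\}$. Continuity of the orbit map makes $\V$ an open identity neighborhood, so the hypothesis supplies a finite set $\mathcal{F}$ and $k\ge 1$ with $A \subseteq (\mathcal{F}\V)^k$. Writing $R = \max_{f\in\mathcal{F}} d(f\cdot x, x)$, every $h = fv \in \mathcal{F}\V$ satisfies $d(h\cdot x, x) \le d(v\cdot x, x) + d(f\cdot x,x) < 1 + R$ by isometry-invariance and the triangle inequality. A telescoping estimate along a factorization $a = h_1\cdots h_k$ then bounds the displacement of any $k$-fold product by $k(1+R)$, so $A\cdot x$ has diameter at most $2k(1+R)$ and $A$ is CB.

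The hard direction (CB $\Rightarrow$ criterion) I would prove by contraposition: assuming the criterion fails for some symmetric open identity neighborhood $\V$, meaning $A \not\subseteq (\mathcal{F}\V)^k$ for every finite $\mathcal{F}$ and every $k$, I must manufacture a single continuous left-invariant \'ecart of infinite diameter on $A$. Using that $G$ is Polish, hence metrizable, I would apply the Birkhoff--Kakutani construction to a chain of symmetric open neighborhoods refining $\V$ to obtain a compatible continuous left-invariant \'ecart governing the small-scale (continuity) behavior, and then pass to the maximal continuous left-invariant \'ecart $\rho_{\V}$ subordinate to $\V$, namely the supremum of all continuous left-invariant \'ecarts $\partial$ normalized by $\sup_{v\in\V}\partial(e,v) \le 1$. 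The point is that this family is equicontinuous at the identity — all members are $\le 1$ on the fixed open set $\V$ — so the supremum is again a continuous left-invariant \'ecart. I would then show that a $\rho_{\V}$-bounded subset of $A$ can be reached from the identity by chains of uniformly bounded length whose steps lie in $\V$ apart from finitely many ``jumps,'' and that collecting representatives of those jumps into a finite set $\mathcal{F}$ produces a covering $A \subseteq (\mathcal{F}\V)^k$. Hence the failure of the criterion forces $\rho_{\V}$ to be unbounded on $A$, contradicting CB-ness.

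I expect the main obstacle to be this hard direction, and specifically the reconciliation of the discrete, combinatorial quantity $(\mathcal{F}\V)^k$ with the requirement that the witnessing \'ecart be genuinely continuous: the naive word metric with generating set $\mathcal{F}\cup\V$ is left-invariant but has a non-open unit ball, so it is discontinuous and cannot serve directly. The technical heart is therefore the construction and control of the maximal subordinate \'ecart $\rho_{\V}$ — verifying that the supremum preserves continuity and left-invariance, handling elements outside the subgroup generated by $\V$, and establishing that its bounded sets admit the finite ``jump set'' $\mathcal{F}$ — which is precisely the bridge between the metric and combinatorial pictures and the step most in need of care.
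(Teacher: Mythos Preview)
The paper does not prove \Cref{fact:RosCrit}; it is quoted verbatim as a result of Rosendal and used as a black box. So there is nothing to compare your argument against here, and any assessment is of your sketch on its own terms.

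Your easy direction (criterion $\Rightarrow$ CB) is correct and complete.

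For the hard direction your overall strategy --- produce a single continuous left-invariant \'ecart, unbounded on $A$, from the failure of the covering condition at some $\V$ --- is exactly the standard one, but the step you single out as delicate really is a gap as written. The normalization $\sup_{v\in\V}\partial(e,v)\le 1$ does \emph{not} make the family equicontinuous at the identity: it bounds each $\partial$ by $1$ on $\V$, but says nothing about how fast $\partial(e,\cdot)$ decays on smaller neighborhoods, so the supremum of the family need not be continuous. The fix you already gesture at is the right one: fix a Birkhoff--Kakutani chain $\V=V_0\supset V_1\supset\cdots$ with $V_{n+1}^2\subset V_n$ and take the supremum only over continuous left-invariant \'ecarts satisfying $\partial\le 2^{-n}$ on each $V_n$; that family \emph{is} equicontinuous and the supremum is then a continuous left-invariant \'ecart dominating the integer-valued word length with respect to $\V$, from which the covering by some $(\mathcal F\V)^k$ follows for $\rho_\V$-bounded sets. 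Alternatively (and this is closer to how Rosendal actually argues), one uses separability of $G$ to choose an increasing exhaustion by finite sets $\mathcal F_n$, picks witnesses $a_n\in A\setminus(\mathcal F_n\V)^n$, and builds a single explicit \'ecart for which $a_n$ escapes to infinity. Either route closes the gap; just be aware that the one-line equicontinuity claim, as you stated it, does not.
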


In the remark below, we use \Cref{fact:RosCrit} to show that length functions on CB Polish groups are always bounded.  This is key in the proof of \Cref{prop:nondisp}, where the existence of an unbounded length function is used to disprove CB-ness. 
Recall that a \textbf{length function} on a group $G$ is a continuous map $\ell \colon G \to [0,\infty)$ that satisfies $\ell(\id) = 0$, $\ell(g) = \ell(g^{-1})$, and $\ell(gh)\le \ell(g) + \ell(h)$ for all $g,h \in G$.  

\begin{RMK} 
\label{ex:LenCB}
If $G$ is a CB Polish group, then any length function $\ell \colon G \to [0, \infty)$ is bounded. Indeed, take $\V = B_r(\id)$, then by CB-ness of $G$ there is a finite set $\mathcal{F}$ and $k \ge 0$ so that $G \subset (\mathcal{F} \cdot B_r(\id))^k$.  This means $$\displaystyle{\ell(\phi) \le \left( \max_{f \in \mathcal{F}} \ell(f) + r\right)\cdot k},$$ for all $\phi \in G$. 
\end{RMK}

\begin{DEF}
    \label{def:locallyCB}
    A group is \textbf{locally CB} if it has a CB neighborhood of the identity.  
\end{DEF}

When discussing the (local) CB-ness of a given set, it is essential to keep track of the ambient group since it is possible that $H \le G$ might be CB in $G$, but $H$ might not be CB as a subset of itself. For instance, consider the following example.

\begin{EX}
    \label{ex:ambientCBness}
    Let $\S$ be a surface for which $\Mp(\S)$ is CB, and let $\gamma$ be a non-separating simple closed curve in $\S$.  Consider the group generated by a Dehn twist about $\gamma$, namely $\langle T_\gamma \rangle \le \Mp(\S)$. 
    Because $\Mp(\S)$ is assumed to be CB, any subset $A \subseteq \Mp(\S)$, and in particular $\langle T_\gamma \rangle$ must also be CB in $\Mp(\S)$. 
    Thus, $\langle T_\gamma \rangle$ is CB in $\Mp(\S)$, but $\langle T_\gamma \rangle$ is not CB as a subset of itself since $\langle T_\gamma \rangle \cong \mathbb{Z}$ and the continuous isometric action of $\Z$ on $\R$ by translations has unbounded orbits. 
\end{EX}

When $G$ is a Polish group and $H \le G$ is a \emph{finite index} open Polish subgroup, this distinction can be blurred in light of the following fact (see {\cite[Proposition 2.20]{domat2023coarse} } for a proof).  

\begin{FACT}[{\cite[Proposition 5.67]{rosendal2021coarse}}] 
\label{fact:CEmbed}
    Let $G$ be a Polish group and $H \le G$ a finite index open Polish subgroup.  Then $H$ is CB if and only if $G$ is CB. 
    Furthermore, $H$ is coarsely embedded in $G$; that is to say, for $A \subset H$,  $A$ is CB in $H$ if and only if $A$ is CB in $G$.   
    \end{FACT}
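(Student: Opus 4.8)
The statement combines an equivalence (``$H$ is CB $\iff$ $G$ is CB'') with a coarse‑embedding clause (``$A\subseteq H$ is CB in $H$ $\iff$ it is CB in $G$''). My plan is to strip off the trivial implications and reduce everything to one nontrivial implication, then prove that by manufacturing a $G$-space out of an arbitrary $H$-space. Since $H$ is open of finite index it is clopen, so I fix coset representatives with $G=\bigsqcup_{i=1}^{n}t_iH$ and $t_1=\id$. Every continuous isometric $G$-action restricts to one of $H$, so any $A\subseteq H$ that is CB in $H$ is automatically CB in $G$ (and, taking $A=H$ together with the fact that every subset of a CB group is CB in it, $G$ CB $\Rightarrow$ $H$ CB in $G$). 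Conversely, if $H$ is CB and $G$ acts continuously and isometrically on $X$, then $G\cdot x=\bigcup_{i=1}^{n}t_i\cdot(H\cdot x)$ is a finite union of isometric translates of the bounded set $H\cdot x$, hence bounded, so $H$ CB $\Rightarrow$ $G$ CB. Everything then reduces to
\begin{equation*}
A\subseteq H\ \text{and}\ A\ \text{is CB in}\ G \quad\Longrightarrow\quad A\ \text{is CB in}\ H. \tag{$\star$}
\end{equation*}

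\textbf{Proof of $(\star)$.} Given a continuous isometric action of $H$ on a metric space $(Y,d)$ and a point $y_0\in Y$ (if $Y=\varnothing$ there is nothing to do), I want to bound $A\cdot y_0$. I would form the coinduced $G$-space $Z=Y^{n}$ with sup-metric $d_Z\big((y_i)_i,(y_i')_i\big)=\max_i d(y_i,y_i')$: for $g\in G$ let $\sigma_g(i)$ be the index with $gt_i\in t_{\sigma_g(i)}H$, set $m(g,i)=t_{\sigma_g(i)}^{-1}gt_i\in H$, and let $g$ send $(y_i)_i$ to the tuple whose $\sigma_g(i)$-th entry is $m(g,i)\cdot y_i$. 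Checking the cocycle identities $\sigma_{gh}=\sigma_g\circ\sigma_h$ and $m(gh,i)=m(g,\sigma_h(i))\,m(h,i)$ shows this is a $G$-action; it is isometric since coordinate permutations and coordinatewise isometries preserve $d_Z$; and it is continuous because $H$ is open, so $\{g:\sigma_g=\sigma\}$ is open for each $\sigma$ and $g\mapsto m(g,i)$ is continuous there. Because $t_1=\id$, every $h\in H$ has $\sigma_h(1)=1$, so the first-coordinate projection $\pi_1\colon Z\to Y$ satisfies $\pi_1(h\cdot\vec y)=m(h,1)\cdot y_1=h\cdot y_1$; that is, $\pi_1$ intertwines the $H$-actions on $Z$ and on $Y$. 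Since $A\subseteq H$ is CB in $G$, the orbit $A\cdot z_0$ of $z_0=(y_0,\dots,y_0)$ is bounded in $Z$, and applying the $1$-Lipschitz map $\pi_1$ bounds $A\cdot y_0=\pi_1(A\cdot z_0)$. As $Y$ and $y_0$ were arbitrary, $A$ is CB in $H$.

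\textbf{Main obstacle.} I expect the only real work to be the construction of the coinduced action and the verification that it is a continuous isometric $G$-action — the cocycle bookkeeping for $m(g,i)$ and the use of openness of $H$ to make the permutation part $g\mapsto\sigma_g$ locally constant. The remaining pieces (the clopen coset decomposition, boundedness of a finite union of translates, and the reappearance of the $H$-action on $Y$ as the first coordinate of the $H$-action on $Z$) should be routine; I would also remark that ``Polish'' is automatic here, since an open subgroup of a Polish group is a clopen — hence Polish — subspace. One could instead try to apply Rosendal's criterion \Cref{fact:RosCrit} by conjugating a word $f_1v_1\cdots f_kv_k\in H$ to push the $f_j$ to the front (passing to the normal core of $H$ so the conjugated factors stay in $H$), but that leaves one needing to dominate finitely many conjugated neighborhoods $g^{-1}\V g$ by $\V$, which is exactly the control the coinduced-action argument avoids.
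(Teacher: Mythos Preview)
Your argument is correct. The coinduction construction is the standard way to promote an $H$-space to a $G$-space when $[G:H]<\infty$, and you have checked the relevant points carefully: the cocycle identities for $\sigma_g$ and $m(g,i)$, the fact that openness of $H$ makes $g\mapsto\sigma_g$ locally constant (hence the action continuous), and the observation that the first-coordinate projection is $H$-equivariant and $1$-Lipschitz with respect to the sup metric. The reductions at the beginning are also correct; in particular your finite-union-of-translates argument for ``$H$ CB $\Rightarrow$ $G$ CB'' is exactly right.

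As for comparison with the paper: the paper does not prove this statement at all. It is recorded as a \emph{Fact} attributed to Rosendal \cite[Proposition~5.67]{rosendal2021coarse}, with a parenthetical pointer to \cite[Proposition~2.20]{domat2023coarse} for a proof. So there is nothing in the paper to compare your argument against; you have supplied a complete, self-contained proof where the paper simply cites the literature. Your closing remark about the alternative approach via \Cref{fact:RosCrit} and the normal core is apt --- that route does work but requires exactly the uniform control on conjugated neighborhoods that the coinduced action packages automatically.
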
 

\begin{DEF}
    \label{def:CBgen}
    A group is \textbf{CB-generated} if it admits a CB generating set. 
\end{DEF}

The relationship between CB, locally CB, and CB-generated Polish groups can be summarized in the diagram of implications below: 
\begin{center}
    \begin{tikzcd}
        \operatorname{CB} \arrow[rd, Rightarrow,"(3)",swap] \arrow[r, Rightarrow,"(1)"] & \operatorname{CB-generated} \arrow[d, Rightarrow,"(2)"] \\
            & \text{\rm locally CB}        
    \end{tikzcd}
\end{center}

When a group is CB, it is CB-generated since the whole group can be taken as the CB-generating set, so implication (1) holds. Because CB-generated groups have a
well-defined quasi-isometry type by \cite{rosendal2021coarse},  this means in particular that globally CB groups have trivial quasi-isometry type.  
Implication (2) is a result of Rosendal that every CB-generated Polish group is locally CB {\cite[Theorem 1.2]{rosendal2021coarse}}.

Of course, combining (1) and (2) results in implication (3); however, (3) can also be seen directly. If a group is globally CB, then it is locally CB since the whole
group can be taken as the CB neighborhood of the identity.

\subsection{CB Classification of $\Mp(\S)$}

This section will briefly summarize some of the results and tools of Mann--Rafi's CB classification of full mapping class groups \cite{mann2019large}.  

\begin{THM}[{\cite[Theorem 1.7]{mann2019large}}] 
\label{fact:MRgloballyCB}
Suppose that $\S$ is either tame or has a countable end space.  Then $\Mp(\S)$ is CB if and only if $\S$ has infinite or zero genus and $E(\S)$ is self-similar or telescoping.  
\end{THM}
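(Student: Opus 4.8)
The plan is to prove the two implications separately, using Rosendal's criterion (\Cref{fact:RosCrit}) for the sufficiency direction and the boundedness of length functions on CB Polish groups (\Cref{ex:LenCB}) for the necessity direction. Recall that a neighborhood basis of the identity in $\Mp(\S)$ is given by the subgroups $\V_K$ of mapping classes admitting a representative restricting to the identity on a compact subsurface $K \subset \S$. Thus, by \Cref{fact:RosCrit}, showing $\Mp(\S)$ is CB amounts to producing, for each such $K$, a finite set $\mathcal{F} \subset \Mp(\S)$ and an integer $k \ge 1$ with $\Mp(\S) = (\mathcal{F}\,\V_K)^{k}$; and by \Cref{ex:LenCB}, showing $\Mp(\S)$ is not CB amounts to exhibiting a single unbounded length function.

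For the sufficiency direction, I would assume $\S$ has infinite or zero genus and $E(\S)$ self-similar (the telescoping case being parallel). The engine is a \emph{shift} homeomorphism. Self-similarity of $(E(\S), E_G(\S))$ lets me choose a clopen decomposition of the ends, one piece of which, realized by a subsurface $\S' \subset \S \setminus K$, carries the full end type; here the hypothesis $g \in \{0,\infty\}$ is exactly what guarantees $(\S', E(\S'), E_G(\S')) \cong (\S, E(\S), E_G(\S))$, since otherwise the finite genus could not be reproduced away from $K$. Iterating yields a pairwise-disjoint nested chain of copies receding to infinity and hence a homeomorphism $\sigma$ pushing $\S$ off $K$ whose translates tile a copy of the surface. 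I would then run a fragmentation-and-absorption (infinite swindle, Mather-style) argument: conjugation by $\sigma$ carries any class supported near $K$ to one supported in $\S \setminus K$, and a fixed commutator identity rewrites an arbitrary $[f] \in \Mp(\S)$ as a bounded-length product of $\sigma^{\pm 1}$ with elements of $\V_K$. This gives $\Mp(\S) = (\mathcal{F}\,\V_K)^{k}$ with $\mathcal{F} = \{\sigma, \sigma^{-1}\}$ and $k$ absolute, as required; in the telescoping case the telescoping hypothesis supplies the compatible nested sequence of pieces from which $\sigma$ is built.

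For the necessity direction I argue by contrapositive: if $0 < g < \infty$ or $E(\S)$ is neither self-similar nor telescoping, I would build an unbounded length function, contradicting \Cref{ex:LenCB}. When the genus is finite and positive, every handle is confined to a canonical compact subsurface, which is therefore nondisplaceable; when the end space fails both conditions, the structure theory of end spaces furnishes a clopen partition witnessing the failure, from which one extracts a nondisplaceable subsurface $R$. In either case $\Mp(\S)$ acts on the graph whose vertices are the $\Mp(\S)$-translates of $R$, with adjacency given by disjoint realizability, and the minimal-displacement function $\ell([f]) = d(R, [f]\cdot R)$ is a length function that is unbounded precisely because $R$ cannot be pushed off itself. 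This is the nondisplaceable-subsurface obstruction.

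The main obstacle is the swindle in the sufficiency direction: making precise how self-similarity produces a single shift $\sigma$ whose conjugation \emph{uniformly} absorbs arbitrary mapping classes into $\V_K$, and verifying that the commutator bookkeeping produces a constant $k$ independent of $[f]$. The secondary delicate points are carrying the same shift construction through the telescoping hypothesis, and, in the necessity direction, pinning down the combinatorial claim that failure of both self-similarity and telescoping forces the existence of a nondisplaceable subsurface.
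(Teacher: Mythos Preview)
This theorem is not proved in the present paper at all: it is quoted as \cite[Theorem 1.7]{mann2019large} and used as a black box, so there is no ``paper's own proof'' to compare your proposal against. Any assessment can therefore only be against the actual Mann--Rafi argument, not against anything appearing here.

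That said, your sufficiency outline is broadly in the spirit of Mann--Rafi's argument (self-similarity gives a copy of the whole surface inside $\S\setminus K$, and one runs an absorption/swindle to realize Rosendal's criterion), though you have not indicated where the tameness or countable-end-space hypothesis enters; in Mann--Rafi it is used in the converse direction, to pass from ``$E(\S)$ is neither self-similar nor telescoping'' to the existence of a nondisplaceable subsurface via the structure of maximal ends.

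There is, however, a genuine error in your necessity argument. You propose the length function $\ell([f]) = d(R,[f]\!\cdot\! R)$ in the graph whose vertices are $\Mp(\S)$-translates of $R$ and whose edges record disjoint realizability. But $R$ being \emph{nondisplaceable} means precisely that no translate of $R$ is disjoint from $R$, so in that graph $R$ is an isolated vertex and your $\ell$ is either identically $0$ or undefined; it cannot witness unboundedness. The length function that actually works---and the one the present paper reproduces in \Cref{prop:nondisp}---is built from subsurface projections to the curve complex: with $\mu_R$ a filling multicurve on each translate $R$, one sets
\[
\ell(\phi)=\max_{R} d_{\phi(R)}\bigl(\phi(\mu_R),\,\mu_{\phi(R)}\bigr),
\]
and unboundedness is obtained by iterating a pseudo-Anosov supported on the nondisplaceable $R$, invoking Masur--Minsky. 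Your proposed graph-distance function does not do this job.
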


An end space $E(\S)$ is self-similar if for any decomposition $E(\S) = E_1 \sqcup E_2 \sqcup \dots \sqcup E_n$ there is a clopen subset $D \subset E_i$ for some $i$, such that $(D, D\cap E_G(\S)) \cong (E(\S), E_G(\S))$.  
Telescoping is a variation of the self-similar condition (see \cite[Definition 3.3]{mann2019large}).

\begin{RMK}
    \label{rmk:SSTelEndsInf}
    A telescoping surface necessarily has infinitely many ends (see {\cite[Proposition 3.7]{mann2019large}}).
    Likewise, self-similar end spaces contain either a single end or infinitely many ends.  
\end{RMK}

The definition of tameness space mentioned above in \Cref{fact:MRgloballyCB}
is a technical condition about the `local self-similarity' of certain ends of a surface.  We avoid discussing this condition since our main focus will be on surfaces with a finite end space, and tameness will always be satisfied for those surfaces. 

\begin{RMK}
    \label{rmk:finTame}
    If $|E(\S)| < \infty$, then $\S$ is tame. 
\end{RMK}

\begin{THM}[{\cite[Theorem 5.7]{mann2019large}} ] 
\label{fact:MRlocallyCB}
$\Mp(\S)$ is locally CB if and only if there is a finite-type subsurface $K \subset \S$ (possibly empty) such that the connected components of $\S \setminus K $ have infinite-type and either 0 or infinite genus, and partition $E(\S)$ into finitely many clopen sets
\begin{equation*} 
E(\S) = \left( \bigsqcup_{A \in \mathcal{A} } A\right) \sqcup \left( \bigsqcup_{P \in \mathcal{P}} P \right)
\end{equation*} 
such that: 
\begin{enumerate}
    \item Each $A \in \mathcal{A}$ is self-similar, $\mathcal{M}(A) \subset \mathcal{M}(E)$, and $\mathcal{M}(E) = \bigsqcup_{A \in \mathcal{A}} \mathcal{M}(A)$.
    \item Each $P \in \mathcal{P}$ is homeomorphic to a clopen subset of some $A \in \mathcal{A}$.
    \item For any $x_A \in \mathcal{M}(A)$, and any neighborhood $V$ of the end $x_A$ in $\S$, there is $f_V \in \Mp(\S)$ so that $f_V(V)$ contains the complementary component to $K$ with end space $A$.
\end{enumerate}
    Moreover, the set of mapping classes restricting to the identity on $K$, denoted by $\V_K$, is a CB neighborhood of the identity.  Additionally, $K$ can always be taken to have genus 0 if $\S$ has infinite genus and genus equal to $\S$ otherwise. If $\S$ has finitely many isolated planar ends, these may be taken as punctures of $K$.  
    
\end{THM}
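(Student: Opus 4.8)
The plan is to prove both implications with Rosendal's criterion (\Cref{fact:RosCrit}), using the global CB classification (\Cref{fact:MRgloballyCB}) applied to the complementary components of $K$ as the main engine, and using the length-function obstruction of \Cref{ex:LenCB} (together with the nondisplaceable-subsurface technique behind \Cref{lem:threeEnds}) to rule out the bad cases.

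For the backward direction, suppose a finite-type $K \subset \S$ with properties (1)--(3) exists. Since $\V_K$ is an open neighborhood of the identity (indeed an open subgroup), it suffices to show $\V_K$ is CB. Fix a basic neighborhood of the identity; we may assume it is $\V_L$ for a finite-type $L \supseteq K$, and by \Cref{fact:RosCrit} we want a finite $\mathcal{F} \subset \Mp(\S)$ and $k \ge 1$ with $\V_K \subseteq (\mathcal{F}\V_L)^k$. Because $f \in \V_K$ fixes $K$ pointwise, it restricts to each of the finitely many components of $\S \setminus K$, so $f$ is a product of a bounded number of mapping classes each supported in one such component. For a component whose end space is a periphery piece $P \in \mathcal{P}$, property (2) provides a fixed homeomorphism of $\S$ carrying $P$, hence the support of the corresponding factor, into an absorbing component with end space some $A \in \mathcal{A}$, at the cost of one element of $\mathcal{F}$. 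For a component $R_A$ with end space $A \in \mathcal{A}$: since $A$ is self-similar and $R_A$ has $0$ or infinite genus, \Cref{fact:MRgloballyCB} gives that $\Mp(R_A)$ is globally CB, so the remaining factor lies in a coarsely bounded subset of $\Mp(R_A)$; property (3) then supplies a fixed mapping class pushing a neighborhood of a maximal end of $A$ across $L$, which conjugates that coarsely bounded subset into $\V_L$ up to bounded error. Bounding the number of factors and the cost of handling each one gives the required covering of $\V_K$.

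For the forward direction, assume $\Mp(\S)$ is locally CB. A CB neighborhood of the identity contains some $\V_K$, and every subset of a CB set is CB, so $\V_K$ is CB; moreover $\V_{K'} \subseteq \V_K$ stays CB for any finite-type $K' \supseteq K$, so we may freely enlarge $K$ until its complementary components are all of infinite type with $0$ or infinite genus (and $K$ carries all of the genus of $\S$ when $\S$ has finite genus). If $\S \setminus K$ had infinitely many components, or had a component whose end space fails to be self-similar, then --- counting independent twists and handle shifts supported in $\V_K$ in the first case, and extracting a nondisplaceable subsurface inside the bad component as in the proof of \Cref{lem:threeEnds} in the second --- one produces a continuous length function on $\Mp(\S)$ unbounded on $\V_K$, contradicting \Cref{ex:LenCB}. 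Hence $\S \setminus K$ decomposes $E(\S)$ into finitely many self-similar pieces $A$ and periphery pieces $P$, and inspecting how the groups $\Mp(R_A)$ (each globally CB by \Cref{fact:MRgloballyCB}) must interact to keep $\V_K$ coarsely bounded forces conditions (1)--(3); the final assertions about the genus of $K$ and about realizing finitely many isolated planar ends as punctures of $K$ come from one further change of coordinates.

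I expect the main obstacle to be the absorption step in the backward direction: showing precisely that self-similarity of $A$ --- through global CB-ness of $\Mp(R_A)$ (\Cref{fact:MRgloballyCB}) --- lets an arbitrary mapping class supported in $R_A$ be swallowed by a fixed coarsely bounded set after a single auxiliary push across $L$, while keeping $\mathcal{F}$ and $k$ uniform over all $L$. This is where the Bergman-type / infinite-swindle combinatorics of self-similar end spaces enters and where the bookkeeping of the constants is most delicate.
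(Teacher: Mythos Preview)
This statement is not proved in the present paper. It is quoted verbatim from Mann--Rafi as \cite[Theorem~5.7]{mann2019large} and used as a black box (in the proofs of \Cref{thm:mainthm}(b) and (c)); the paper supplies no argument of its own for it. Consequently there is no ``paper's own proof'' against which to check your proposal.

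That said, a brief remark on your sketch as it stands. The backward direction is broadly the right shape --- reduce to each complementary component, use that $\Mp(R_A)$ is globally CB, and use condition~(3) to push things off $L$ --- and this is indeed the strategy in Mann--Rafi. The forward direction, however, is underdeveloped: you assert that a non-self-similar component yields a nondisplaceable subsurface ``as in the proof of \Cref{lem:threeEnds}'', but \Cref{lem:threeEnds} is a $\PMap$-statement hinging on having at least three distinct ends that must be fixed, which is exactly what fails in $\Mp$; in Mann--Rafi the obstruction for a non-self-similar component is obtained differently (via their partial order on ends and the structure of $\mathcal{M}(E)$), and neither that machinery nor the conditions $\mathcal{M}(A)\subset\mathcal{M}(E)$, $\mathcal{M}(E)=\bigsqcup_A\mathcal{M}(A)$ in (1) are ever touched by your argument. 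Likewise the separation into $\mathcal{A}$ and $\mathcal{P}$ pieces is asserted rather than derived. If you intend to actually reprove the Mann--Rafi theorem rather than cite it, those are the gaps that need filling.
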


\begin{EX}
    The ladder surface is an example of a surface with a locally CB, but not a globally CB mapping class group. Because it has finitely many ends, the ladder is a tame surface by \Cref{rmk:finTame}.  
    Since it has only two ends, the ladder surface is not self-similar or telescoping by \Cref{rmk:SSTelEndsInf}.  
    However, the ladder surface is locally CB since it satisfies the modified telescoping condition in \Cref{fact:MRlocallyCB} (3).  
\end{EX} 

\begin{THM}[{\cite[Theorem 1.6]{mann2019large}} ] 
    \label{fact:MRCBgen} 
    For a tame surface with a locally (but not globally) CB mapping class group, $\Mp(\S)$ is CB generated if and only if $E(\S)$ is finite-rank and not of limit type. 
\end{THM}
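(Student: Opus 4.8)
The plan is to promote the locally CB structure of \Cref{fact:MRlocallyCB} to a coarsely bounded generating set. Fix the finite-type subsurface $K\subset\S$, the CB identity neighborhood $\V_K$, and the clopen decomposition $E(\S)=\bigl(\bigsqcup_{A\in\mathcal A}A\bigr)\sqcup\bigl(\bigsqcup_{P\in\mathcal P}P\bigr)$ into self-similar and peripheral pieces provided there. The first step is a reduction: for a locally CB Polish group with CB open identity neighborhood $\V_K$, being CB-generated is equivalent to $\langle\mathcal F\cup\V_K\rangle=\Mp(\S)$ for some \emph{finite} $\mathcal F\subseteq\Mp(\S)$. The forward direction is immediate, since $\mathcal F\cup\V_K$ is then a generating set and is CB (a finite set together with a CB set). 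For the reverse, if $\langle\mathcal F\cup\V_K\rangle$ were proper for every finite $\mathcal F$, then choosing representatives $g_1,g_2,\dots$ of the (countably many, by separability) cosets of $\V_K$ would exhibit $\Mp(\S)=\bigcup_n\langle g_1,\dots,g_n,\V_K\rangle$ as an increasing union of proper open subgroups, and \Cref{fact:RosCrit} applied to the neighborhood $\V_K$ would then confine every CB subset to one of these proper subgroups, so none could generate. Hence the theorem reduces to showing that $E(\S)$ is finite-rank and not of limit type if and only if $\Mp(\S)$ is finitely generated over $\V_K$.

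For sufficiency, assume $E(\S)$ is finite-rank and not of limit type, and let $F$ consist of the shift maps: for each self-similar piece $A\in\mathcal A$ and each level of the hierarchy of end-types, a homeomorphism acting as a shift along a nested chain of subsurfaces converging to a maximal-type end. The not-limit-type hypothesis makes the top-level maximal ends finite in number, finite rank makes the hierarchy finitely deep, and tameness makes these shift maps available and well-behaved, so $F$ is finite. The heart of this direction is a normalization argument: given $\phi\in\Mp(\S)$, use self-similarity together with condition (3) of \Cref{fact:MRlocallyCB} to find a word $w$ in $F^{\pm1}$ with $w^{-1}\phi$ restricting to the identity on $K$, \ie $w^{-1}\phi\in\V_K$ --- concretely, one successively pushes the non-standard part of $\phi$ toward the self-similar ends and corrects it there, and finite rank is exactly what forces this procedure to terminate in a bounded number of steps. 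Then $\Mp(\S)=\langle F\cup\V_K\rangle$, so $\Mp(\S)$ is CB-generated.

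For necessity I would prove the contrapositive: if $E(\S)$ has infinite rank or is of limit type, then $\langle\mathcal F\cup\V_K\rangle\neq\Mp(\S)$ for every finite $\mathcal F$. The device is a composition-controlled level invariant, that is, a continuous function $r\colon\Mp(\S)\to\N$ with $r(\mathrm{id})=0$, $r(\phi^{-1})=r(\phi)$, and the ultrametric inequality $r(\phi\psi)\le\max\{r(\phi),r(\psi)\}$, recording the deepest level of the end-type hierarchy --- in the limit-type case, a cofinality index into the accumulating family of top-type ends --- at which a mapping class acts essentially. Such an $r$ is automatically bounded on the CB set $\V_K$ and on every finite set, and the ultrametric inequality then bounds it on all of $\langle\mathcal F\cup\V_K\rangle$; but the infinite-rank, respectively limit-type, hypothesis provides, above any bound, a shift map whose $r$-value exceeds it, and such a shift map therefore lies outside $\langle\mathcal F\cup\V_K\rangle$. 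By the reduction in the first paragraph, $\Mp(\S)$ is not CB-generated.

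The main obstacle is the two surface-topological arguments organized around the end-type hierarchy: the termination of the normalization in the sufficiency direction, and, in the necessity direction, the construction of the invariant $r$ --- especially verifying the ultrametric inequality, which is what controls $r$ on the whole subgroup $\langle\mathcal F\cup\V_K\rangle$ rather than merely on a generating set. This is where the hypotheses are genuinely consumed: a finitely deep hierarchy with an isolated top translates into ``finitely many shift maps normalize every mapping class,'' while an infinite or accumulating hierarchy produces the unbounded, composition-controlled invariant that precludes finite generation over $\V_K$.
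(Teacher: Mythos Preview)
The paper does not prove this statement: \Cref{fact:MRCBgen} is quoted as a background result from Mann--Rafi \cite[Theorem 1.6]{mann2019large}, with no argument supplied here. So there is no ``paper's own proof'' to compare against; the present article only \emph{uses} the theorem (in the proof of \Cref{thm:mainthm}(c), and then only in the trivial finite-end regime where the finite-rank and limit-type hypotheses are vacuous).

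That said, your outline tracks the actual Mann--Rafi argument reasonably well at the level of strategy. The reduction to finite generation over $\V_K$ is correct and standard. For sufficiency, Mann--Rafi do build a finite family of shift maps indexed by the (finite) hierarchy of end types and then run a normalization procedure much as you describe. For necessity, they construct explicit obstructions in the infinite-rank and limit-type cases separately; your packaging of these as a single ultrametric invariant $r$ is a nice conceptual gloss but is not quite how they present it, and verifying that such an $r$ genuinely satisfies $r(\phi\psi)\le\max\{r(\phi),r(\psi)\}$ is real work that your proposal only gestures at. As you yourself flag in the final paragraph, both the termination of the normalization and the construction of $r$ are the substantive parts, and your write-up is a proof \emph{plan} rather than a proof. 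If this were being graded as a proof of the cited theorem, the main gap is that the surface-topological content --- precisely how finite rank forces termination, and precisely how infinite rank or limit type yields the composition-controlled invariant --- is asserted rather than carried out.
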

    
See {\cite[Definition 6.2 and 6.5]{mann2019large}}  for the definitions of finite rank and limit type, respectively. For our purpose, it is relevant that both the infinite rank and limit type conditions require the space to have an infinite number of ends.

\subsection{Nondisplaceable Subsurfaces}

For two subsurfaces $S$ and $S'$, we say $S \cap S' \ne \emptyset$ if every subsurface homotopic to $S$ intersects every subsurface homotopic to $S'$.  

\begin{DEF}
    A connected finite-type subsurface $S \subset \S$ is called \textbf{nondisplaceable} with respect to the group $G \le \Mp(\S) $ if $g(S) \cap S \ne \emptyset$ for all $g \in G$. 
\end{DEF}

Using the presence of a nondisplaceable subsurface in $\S$ to conclude that $\Mp(\S)$ is not CB is one of the key tools of Mann--Rafi. This is also important in disproving the CB-ness of $\PMap(\S)$ below.

\begin{PROP}[cf. {\cite[Theorem 1.9]{mann2019large}}]
    \label{prop:nondisp}
    If $\S$ has a nondisplaceable subsurface with respect to $\PMap(\S)$, then $\PMap(\S)$ is not globally CB. 
\end{PROP}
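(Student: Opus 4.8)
The plan is to exhibit an unbounded continuous length function $\ell\colon\PMap(\S)\to[0,\infty)$, which by \Cref{ex:LenCB} rules out $\PMap(\S)$ being globally CB. Let $W\subset\S$ be a nondisplaceable subsurface for $\PMap(\S)$. First I would enlarge it: if $W'\supseteq W$ is a connected finite-type essential subsurface, then $W'$ is again nondisplaceable, because disjoint representatives of $gW'$ and $W'$ would contain disjoint representatives of $gW$ and $W$. Since $\S$ is of infinite type, some component of $\S\setminus W$ is of infinite type, so we may enlarge $W$ by tubing in a handle or a couple of extra punctures; hence we may assume $W$ carries a \emph{pure} pseudo-Anosov homeomorphism $\phi$ (one fixing every puncture of $W$---say a suitable product of Dehn twists about a filling pair of curves, via Thurston's construction). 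Extending $\phi$ by the identity on $\S\setminus W$ gives $f\in\PMap(\S)$, since it fixes every end of $\S$. Fix in addition a simple closed curve $\gamma$ that fills $W$.

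Next I would set
\begin{equation*}
\ell(g)\;=\;\sup_{V\in\PMap(\S)\cdot W}\ d_{\C(V)}\bigl(\pi_V(\gamma),\,\pi_V(g\gamma)\bigr),
\end{equation*}
where $V$ runs over the $\PMap(\S)$-orbit of $W$, $\C(V)$ is the (finite-type) curve graph of $V$, and $\pi_V$ denotes subsurface projection. The key observation is that nondisplaceability forces every translate $gW$ either to coincide with, or to overlap essentially, every translate $V$: they cannot be disjoint by nondisplaceability, and they cannot be properly nested since they are homeomorphic finite-type surfaces. As $g\gamma$ fills $gW$, it is therefore never isotopic off $V$, so every projection appearing in the supremum is defined. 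This is precisely what makes $\ell$ a genuine length function: $\ell(\id)=0$, while symmetry and the triangle inequality follow from the equivariance of subsurface projection (each $g$ induces an isometry $\C(V)\to\C(gV)$ sending $\pi_V(x)$ to $\pi_{gV}(gx)$) together with $\PMap(\S)$-invariance of the orbit $\PMap(\S)\cdot W$, with no breakdown coming from undefined projections. For continuity, $\ell$ vanishes on the open neighborhood $\V_K$ of the identity (cf.\ the $\V_K$ of \Cref{fact:MRlocallyCB}) whenever $K$ is a compact subsurface containing $\gamma$: a representative that is the identity on $K$ fixes $\gamma$, so $g\gamma=\gamma$ and every term in the supremum vanishes. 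A length function continuous at the identity is continuous.

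For unboundedness, taking $V=W$ in the supremum gives $\ell(f^{\,n})\ge d_{\C(W)}\bigl(\pi_W(\gamma),\pi_W(f^{\,n}\gamma)\bigr)$; since $\gamma$ and $f^{\,n}\gamma=\phi^{\,n}\gamma$ both lie in $W$ this equals $d_{\C(W)}(\gamma,\phi^{\,n}\gamma)$, which tends to $\infty$ because the pseudo-Anosov $\phi$ acts on $\C(W)$ with positive translation length (Masur--Minsky). Hence $\ell$ is unbounded, and $\PMap(\S)$ is not globally CB.

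The step I expect to be the main obstacle is the bookkeeping around the triangle inequality: subsurface-projection distances obey it cleanly only when the intermediate projection is defined, so the argument genuinely relies on combining ``$W$ nondisplaceable'' with ``$\gamma$ fills $W$'' to ensure that every projection survives. Secondary care is needed to check that enlarging $W$ preserves nondisplaceability and that the pseudo-Anosov can be chosen to fix the punctures of $W$, so that $f$ really lands in $\PMap(\S)$.
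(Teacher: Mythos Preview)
Your argument is essentially the paper's (which in turn is Mann--Rafi's, restricted to $\PMap$): build an unbounded continuous length function out of subsurface projections to the $\PMap(\S)$--orbit of the nondisplaceable subsurface, and witness unboundedness with a pseudo-Anosov supported on that subsurface. After one uses equivariance of projection, your formula $\ell(g)=\sup_V d_{\C(V)}(\pi_V(\gamma),\pi_V(g\gamma))$ is exactly the paper's $\ell(\phi)=\max_R d_{\phi(R)}(\phi(\mu_R),\mu_{\phi(R)})$ rewritten. Your extra care about enlarging $W$ and choosing a puncture-fixing pseudo-Anosov so that the extension lands in $\PMap(\S)$ is a welcome addition that the paper leaves implicit.

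There is one genuine slip: a \emph{simple closed curve} cannot fill a finite-type surface of positive complexity, and with a single curve $\gamma\subset W$ the projections $\pi_V(\gamma)$ can be empty even when $V$ overlaps $W$ essentially (just take $V$ meeting $W$ in a region missing $\gamma$). Your own bookkeeping for the triangle inequality leans precisely on ``$\gamma$ fills $W$'' to guarantee all projections survive, so this is not merely cosmetic. The fix is exactly what the paper does: take $\gamma$ to be a \emph{filling multicurve} $\mu_S$ on $W$; then $V\cap W\neq\emptyset$ forces $\pi_V(\mu_S)\neq\emptyset$, and the rest of your argument goes through verbatim.
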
 

The proof of \Cref{prop:nondisp} follows exactly as in Mann--Rafi, but with $\PMap(\S)$ instead of $\Mp(\S)$.  Below, we will give a proof sketch of their argument and justify the replacement of $\Mp(\S)$ with $\PMap(\S)$.  
The proof aims to exploit the unboundedness of the action of pseudo-Anosov elements on the curve complex of a finite-type subsurface proven by Masur--Minsky \cite{MR1714338}.  Curve graphs and subsurface projections will play a role in the proof.

    The \textbf{curve graph} of a surface $\S$, denoted by $\C(\S)$ is the graph whose vertices correspond with the isotopy classes of simple closed curves in $\S$ where two vertices $\alpha$ and $\beta$ are connected by an edge if $\alpha$ and $\beta$ can be realized disjointly.  The curve graph can be equipped with the path metric $d_{\S}$ induced by giving each edge length 1.  

    Let $S \subset \S$ be a finite-type subsurface.  For a curve $\gamma \in \C(\S)$, we define the \textbf{subsurface projection} of $\gamma$ onto $S$ as  
    \begin{center} 
    $\pi_S(\gamma) \dfn 
    \begin{cases}
    \\
    \\
    \\
    \end{cases}$ \hspace{-.75cm}
    \begin{tabular}{cc}
    $\emptyset$ & \text{ if there is a representative of $\gamma$ disjoint from $S$},\\
    $\gamma$ & \text{ if there is a representative of $\gamma$ contained in $S$,} \\
    \multicolumn{2}{l}{\text{
    surger $\gamma$ along $\partial S$ otherwise.}}
    \end{tabular}
    \end{center} 
     By surgering $\gamma$ along $\partial S$, we mean the following. If $\gamma \cap S \ne \emptyset$ but $\gamma \not\subset S$ then $\gamma \cap S$ is a union of arcs $\set{\alpha_i}$ based in $\partial S$.  Consider the boundary of a regular neighborhood of an arc $\alpha_i$.  The result is two arcs that we surger with the boundary of a regular neighborhood of the boundary component(s) of $S$ that $\alpha_i$ is based at.  The projection of $\pi_S(\gamma)$ is the union of all curves obtained from this surgery. See  \Cref{pic:subsurfproj} for an example of this surgery. 

    \begin{figure}[ht!]
    
    \centering
    \scalebox{.733333}{
    \begin{overpic}[width=9cm]{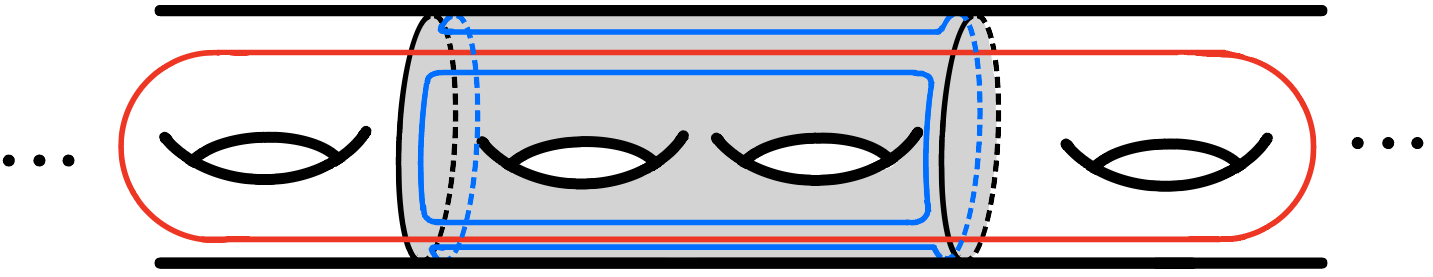}
    \put(11,12){\color{red}$\gamma$}
    \put(24.5,5){\color{black} $S$}
    \put(46,11){\color{blue} $\pi_S(\gamma)$}
    \end{overpic}
    }
    
    \caption{An example of the surgery used to define the subsurface projection. Here, $\pi_S(\gamma)$ is the disjoint union of the blue curves. 
    \label{pic:subsurfproj}
    }
\end{figure}
Similarly, if $\mu \subset \C(\S)$ is a multicurve, we define $\pi_S(\mu) \dfn \bigsqcup_{\gamma \in \mu} \pi_S(\gamma)$.  For a subsurface $R\subset \S$, we define $\pi_S(R) \dfn \pi_S(\partial R)$.  

The path metric $d_S$ on the curve complex $\C(S)$ of a finite-type subsurface $S \subset \S$ can be extended to curves $\gamma_1, \gamma_2 \in \C(\S)$ by defining 
\begin{equation*}
    d_S(\gamma_1, \gamma_2) \dfn \max_{\alpha_i \in \pi_S(\gamma_i)} d_S(\alpha_1, \alpha_2)
\end{equation*} 
and similarly for multicurves $\mu_1, \mu_2 \subset \C(\S)$, 
    $d_S(\mu_1, \mu_2) \dfn \displaystyle{\max_{\gamma_i \in \pi_S(\mu_i)} d_S(\gamma_1, \gamma_2)}.$

\noindent\emph{Proof of \Cref{prop:nondisp}.}  
The idea of Mann--Rafi's argument is to construct an unbounded length function $\ell \colon \Mp(\S) \to \Z$, which in light of  \Cref{ex:LenCB} is enough to show that $\Mp(\S)$ is not CB.  

We now show that their argument goes through with $\PMap(\S)$ in place of $\Mp(\S)$.  Let $S \subset \S$ be a nondisplaceable subsurface and let $\mathcal{I} = \set{f(S) \mid f \in \PMap(\S)}$. 
Let $\mu_S$ be a multi-curve that fills $S$.  Because $S$ is nondisplaceable, $R \cap S \ne \emptyset$ for all $R \in \mathcal{I}$, so $\mu_R \dfn \pi_R(\mu_S)$ is nonempty for all $R \in \mathcal{I}$.  Define a length function 
\begin{equation*}
    \ell \colon \PMap(\S) \to \Z^+ \qquad \text{given by} \qquad  \ell(\phi) \dfn \max_{R \in \mathcal{I}} d_{\phi(R)}(\phi(\mu_R), \mu_{\phi(R)})
\end{equation*} 
where $d_{\phi(R)}$ is the path metric in the curve complex $C(\phi(R))$.  
Intuitively, the function $\ell$ is measuring the maximum distance in the curve complex of $\phi(R)$ between curves in $\mu_R$ transformed by $\phi$ and curves in the intersection of $S$ and $\phi(R)$.

Mann--Rafi prove $\ell$ is a continuous length function and demonstrate that $\ell$ is unbounded by taking a $\phi \in \PMap(\S)$ that preserves $S$ and restricts to a pseudo-Anosov on $S$. Then 
\begin{equation*}
\ell(\phi^n) \ge d_{\phi^n(S)} (\phi^n(\mu_S), \mu_{\phi^n(S)}) = d_S(\phi^n(\mu_S), \mu_S) \ge d_S(\phi^n(\gamma), \gamma)
\end{equation*}
where $\gamma$ is any essential curve in $S$, and the middle equality holds because $\phi$ preserves $S$.  By the work of Masur--Minsky \cite{MR1714338} $d_S(\phi^n(\gamma), \gamma) \to \infty$ as $n \to \infty$ for such a map $\phi$, so $\ell$ is unbounded.\hfill $\square$

\section{CB Classification of $\PMap(\S)$}

\subsection{Globally CB Classification}

We show below that one of the striking differences between $\PMap(\S)$ and $\Mp(\S)$ is that surfaces with at least three ends contain a nondisplaceable subsurface with respect to $\PMap(\S)$. In contrast, there are many examples of surfaces with infinitely many ends that don't have a nondisplaceable subsurface with respect to $\Mp(\S)$.  For example, the Cantor tree surface pictured in  \Cref{pic:ladder_LN} has a globally CB mapping class group, and therefore, it does not have any nondisplaceable subsurfaces with respect to the (full) mapping class group.  
     
\begin{namedtheorem}[\Cref{lem:threeEnds}]
     Let $\S$ be an infinite-type surface, possibly with an infinite end space. If $|E(\S)| \ge 3$ then $\S$ contains a nondisplaceable subsurface with respect to $\PMap(\S).$
\end{namedtheorem}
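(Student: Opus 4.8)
The plan is to produce an explicit finite-type subsurface $S \subset \S$ and show that $\phi(S) \cap S \neq \emptyset$ for every $\phi \in \PMap(\S)$. The natural candidate is a subsurface whose complementary components ``see'' three distinct ends: pick three distinct points $e_1, e_2, e_3 \in E(\S)$, choose pairwise-disjoint clopen neighborhoods $D_1, D_2, D_3$ of them in $E(\S)$ whose union is all of $E(\S)$ (possible since $E(\S)$ is Hausdorff, compact, and totally disconnected), and realize this partition by a finite-type subsurface $S$ with (at most) three boundary curves so that $\S \setminus S$ has exactly three infinite-type components $U_1, U_2, U_3$ with $E(U_i)$ eventually equal to $D_i$. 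If $\S$ has positive (finite) genus one absorbs it into $S$; if $\S$ has infinite genus one must be slightly more careful and distribute the genus, but the combinatorial skeleton is the same — $S$ is a ``pair of pants'' (possibly with extra genus) separating three ends.

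Next I would argue nondisplaceability. Suppose $\phi \in \PMap(\S)$ and, for contradiction, that $\phi(S)$ can be isotoped off $S$. Then $\phi(S)$ lies inside one complementary component of $S$, say $U_1$, and symmetrically $S$ lies inside one complementary component of $\phi(S)$. The key point is that $\phi$ fixes every end of $\S$, so $\phi$ must send the set of ends lying ``behind'' a given side of $S$ to a set of ends lying behind the corresponding side of $\phi(S)$. Concretely: the three complementary components of $\phi(S)$ carry end sets $\phi(D_1), \phi(D_2), \phi(D_3) = D_1, D_2, D_3$ (as subsets of $E(\S)$, since $\phi$ acts trivially on $E(\S)$). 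If $\phi(S) \subset U_1$, then $\phi(S)$ and its three complementary components all have end sets contained in $\overline{D_1}$ up to finitely many ends — but $D_2$ and $D_3$ are nonempty and disjoint from $D_1$, so two of the three end sets $D_1, D_2, D_3$ would have to be empty, a contradiction. Hence no such $\phi$ exists and $S$ is nondisplaceable with respect to $\PMap(\S)$.

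The main obstacle is making the topological bookkeeping rigorous when $E(\S)$ is infinite (in particular a Cantor set) and when $\S$ has infinite genus: one must be careful that ``$\phi(S)$ is displaceable off $S$'' genuinely forces $\phi(S)$ into a single complementary component, that the end-space partition induced by a separating finite-type subsurface behaves well under homeomorphism, and that absorbing genus does not interfere. I would handle this by working with a fixed exhaustion of $\S$ by finite-type subsurfaces and the induced clopen partitions of $E(\S)$, translating ``$g(S) \cap S = \emptyset$'' into a statement about nested clopen subsets of $E(\S)$ and their images under the identity map on $E(\S)$; the contradiction then comes down to the elementary fact that one cannot fit three pairwise-disjoint nonempty clopen sets into the complement-structure of a surface that has been pushed entirely to one side. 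A secondary subtlety is the case $|E(\S)| = \infty$ with some ends accumulated by genus, where the planarity/genus type of the $D_i$ must be respected when choosing $S$; but since we only need \emph{some} nondisplaceable subsurface, we are free to choose the $e_i$ and $D_i$ conveniently (e.g. isolated ends when available, or a clopen splitting that is preserved under the constraint that $\phi$ fixes ends pointwise and preserves $E_G(\S)$).
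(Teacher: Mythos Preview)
Your approach is essentially the paper's: build a pair-of-pants $S$ (the paper calls it $K$) whose three complementary components $U_1,U_2,U_3$ realize a partition $E(\S)=D_1\sqcup D_2\sqcup D_3$ into nonempty clopen sets, then use that a pure class fixes each $D_i$ to contradict $\phi(S)\cap S=\emptyset$.

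Your contradiction step, however, is misstated. It is \emph{not} true that if $\phi(S)\subset U_1$ then ``$\phi(S)$ and its three complementary components all have end sets contained in $D_1$'': one of the complementary components of $\phi(S)$ in $\S$ must contain the connected set $S\cup U_2\cup U_3$ (this set is disjoint from $\phi(S)\subset U_1$), and that component therefore carries all of $D_2\cup D_3$. The correct contradiction --- and this is exactly what the paper does --- is that the complementary components of $\phi(S)$ have end sets $D_1,D_2,D_3$ (since $\phi$ is pure), so $D_2\cup D_3\subset D_j$ for a single $j$, impossible as the $D_i$ are nonempty and pairwise disjoint. You already recorded the needed ingredients (``$S$ lies inside one complementary component of $\phi(S)$'' and ``the complementary components of $\phi(S)$ carry end sets $D_1,D_2,D_3$''); you just drew the wrong conclusion from them.

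Finally, your worries about absorbing or distributing genus, and about $E(\S)$ being a Cantor set, are unnecessary. The argument uses only the clopen partition of $E(\S)$ induced by $S$ and the connectedness of $S\cup U_2\cup U_3$; neither depends on genus or on $|E(\S)|$, and the paper makes no such case distinctions.
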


\begin{proof}

First, we claim that there is a compact subsurface $K \subset \S$ whose complementary components partition $E(\S)$ into three nonempty disjoint subsets.  
To see why, suppose that $x_1, x_2, x_3 \in E(\S)$ are distinct ends.  Recall that by definition, $x_1$ is an equivalence class of nested connected subsurfaces with compact boundary $x_1 = [S_1^0 \supset S_1^1 \supset S_1^2 \supset \cdots]$. 
Because the ends $x_1, x_2, x_3$ are distinct, eventually there is a subsurface, say $S_1^i$, in the sequence $S_1^0 \supset S_1^1 \supset S_1^2 \supset \cdots$ that does not contain $x_2$ or $x_3$. 
By definition, $S_1^i$ is a connected surface with compact boundary.

\begin{figure}[ht!]
    \centering
    \begin{overpic}[width=5cm]{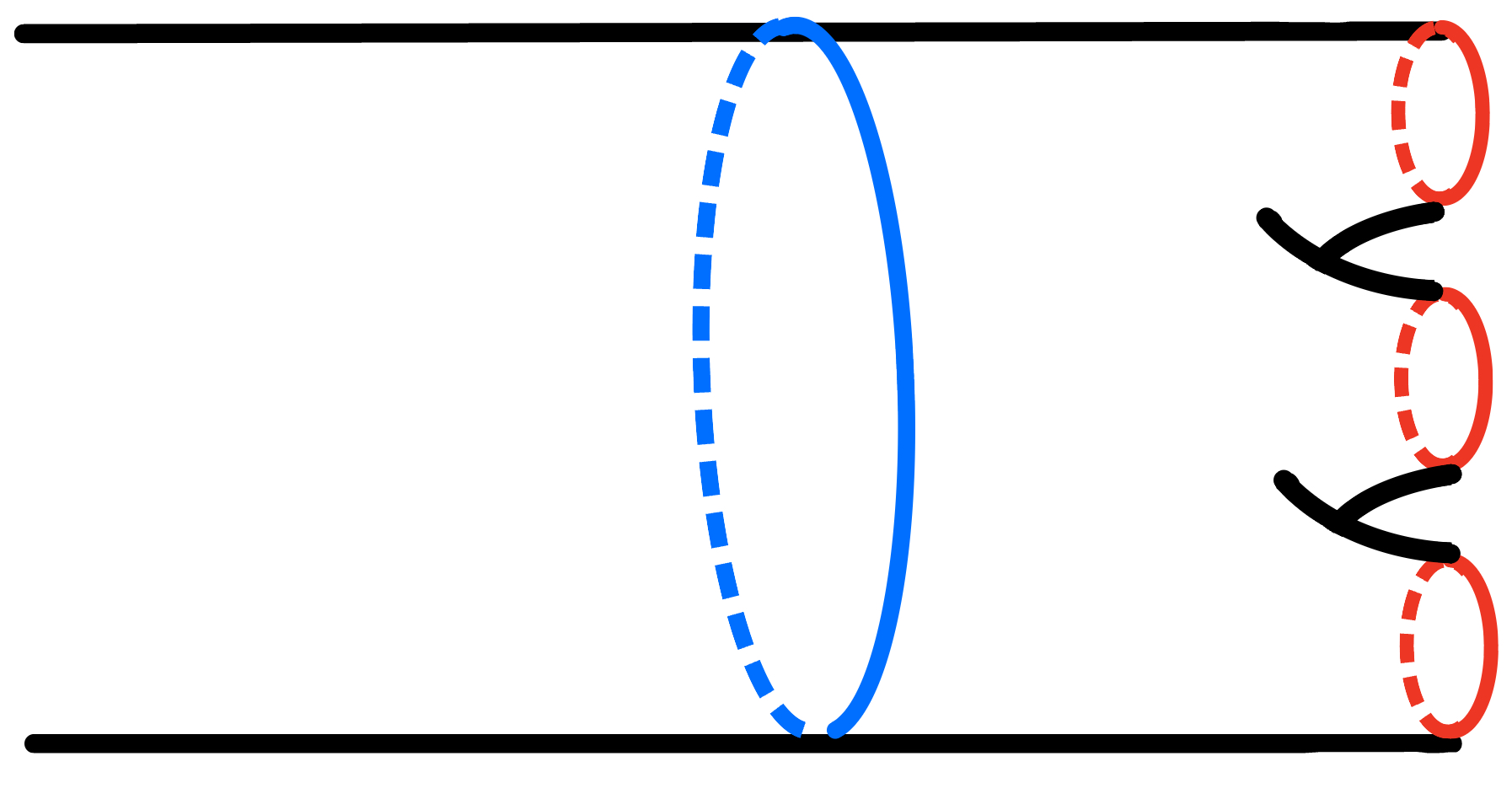}
        \put(60,45){\color{blue} $\gamma_1$}
        \put(35,7){$S_1$}
        \put(70,-5){$S_1^i$}
        \put(0,25){$\cdots$}
        \put(100,45){\color{red} $\partial S_1^i$}
    \end{overpic}
    \caption{Since $S_1^i$ is connected and has compact boundary, it can be visualized with the boundary components lined up as above, up to homeomorphism.  The separating curve $\gamma_1$ is chosen to cut off only the boundary components of $S_1^i$; in particular, $E(S_1^i) = E(S_1)$. 
    \label{pic:cptBdry}
    }
\end{figure}

As pictured in \Cref{pic:cptBdry}, we can find a separating simple closed curve $\gamma_1$, which separates all the boundary components of $S_1^i$ from the rest of the subsurface $S_1^i$.  
Cutting the original surface $\S$ along $\gamma_1$, we obtain two new subsurface, each with a single boundary component $\gamma_1$: one containing the boundary of $S_1^i$ call it $\S'$, and another subsurface, call it $S_1$, that has $E(S_1) = E(S_1^i)$.   
Thus the curve $\gamma_1$ partitions the end space of $\S$ into two subsets $E(\S) =  E(S_1) \sqcup E(\S')$, where $x_1 \in E(S_1)$ and $x_2, x_3 \in E(\S')$.

We can apply the same strategy to the end $x_2 \in E(\S')$ and find a subsurface $S_2 \subset \S'$ that has a single boundary component $\gamma_2$, and $x_2 \in E(S_2)$ but $x_3 \not\in E(S_2)$. To simplify notation, let $\S'' \dfn \S \setminus \operatorname{int}(S_1 \cup S_2)$.  
By construction $\S''$ is a connected subsurface with two boundary components $\gamma_1$ and $\gamma_2$, and $x_3 \in E(\S'')$.   
Finally, let $\gamma_3$ be a separating simple closed curve that cuts $\S''$ into two components $K$ and $S_3$,
where $K$ is a compact subsurface with three boundary components $\gamma_1, \gamma_2,$ and $\gamma_3$, and $S_3$ is a subsurface with $E(S_3) = E(\S'')$. 
\Cref{pic:cptSubsurf} illustrates this setup.

\begin{figure}[ht!]
    \centering
    \begin{overpic}[width=7cm]{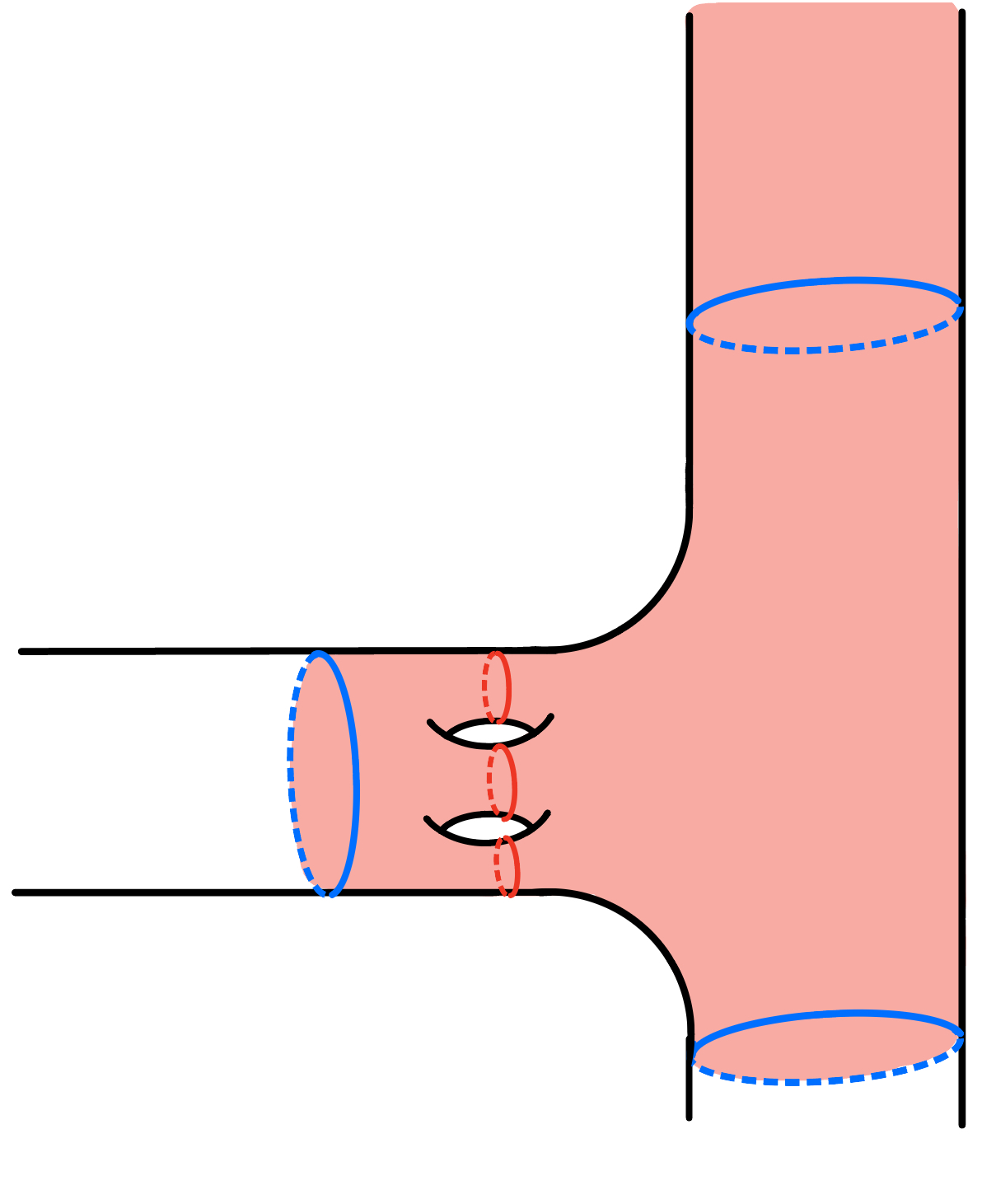}
    \put(18,28){$S_1$}
    \put(73,5){$S_2$}
    \put(73,80){$S_3$}
    \put(20,40){\color{blue} $\gamma_1$}
    \put(52,10){\color{blue} $\gamma_2$}
    \put(52,73){\color{blue} $\gamma_3$}
    \put(38,48){\color{red} $\partial S_1^i$}
    \put(25,5){$\underbrace{\hspace{5cm}}$}
    \put(53,-2){$\S'$}
    \put(65,45){\color{blue} $K$}
    \put(86,57){\color{red}$\S''$}
    \put(81,57){\color{red} $\begin{rcases*}{}\\{} \\{} \\{} \\{} \\{} \\{} \\{} \\{} \\{} \\{} \\{} \\{} \\{} \\ {} \end{rcases*}$}
    \end{overpic}
    \caption{The surface $\S$ partitioned into subsurfaces $S_1, S_2, S_3$ and $K$.  The subsurface $\S'' = \S \setminus \operatorname{int}(S_1 \cup S_2)$ is shaded in red.  
    \label{pic:cptSubsurf}
    }
\end{figure}

In summary, we have constructed a compact subsurface $K$ such that 
\begin{equation*} 
    \S \setminus \operatorname{int}(K) = S_1 \sqcup S_2 \sqcup S_3
\end{equation*} 
with $E(S_i) \ne \emptyset$ for each $i$, and $E(\S) = E(S_1) \sqcup E(S_2) \sqcup E(S_3)$. 

We now aim to prove that $K$ is nondisplaceable with respect to $\PMap(\S)$. Let $f \in \PMap(\S)$, and then 
$ 
\S \setminus \operatorname{int}(f(K)) = S_1' \sqcup S_2' \sqcup S_3'
$
where up to relabeling $S_i' = f(S_i)$.  We claim that 
\begin{equation}
\label{eqn:HomEnds}
E(S_i') = E(S_i). 
\end{equation}
By the relabeling, $E(S_i') = E(f(S_i))$. Since homeomorphisms $S_i \to S_i' $ induce homeomorphisms of the end spaces $E(S_i) \to E(S_i')$, it follows $E(f(S_i)) = f(E(S_i)).$
Finally, since $f$ is pure, $f$ fixes the ends point-wise, so $f(E(S_i)) = E(S_i).$  Putting this all together, 
$$E(S_i') = E(f(S_i)) = f(E(S_i)) = E(S_i)$$
which proves \Cref{eqn:HomEnds}.

Now, suppose that $K$ were displaceable.  Then $f(K) \cap K = \emptyset$, so $f(K)$ must be contained in one of the complementary components $S_i$.   Without loss of generality, assume $f(K) \subset S_1$. 
Because $S_2, S_3,$ and $K$ are all disjoint from $f(K)$ and $S_2 \cup S_3 \cup K$ is connected, the subsurface $S_2 \cup S_3 \cup K$ must be contained in one of the complementary components of $f(K)$. But this is not possible, since for example if $S_2 \cup S_3 \cup K \subset S_3'$, then $E(S_2) \subset E(S_3')$, but by \Cref{eqn:HomEnds}, $E(S_2) = E(S_2')$ contradicting $E(S_2') \cap E(S_3') = \emptyset$.  Similar arguments show $S_2 \cup S_3 \cup K \not\subset S_j'$ for $j = 1,2.$ 
    \end{proof}

Equipped with \Cref{prop:nondisp}, we are poised to begin the proof of \Cref{thm:mainthm}.  

\begin{namedtheorem}[\Cref{thm:mainthm} (a)]
    Let $\S$ be an infinite-type surface.  Then $\PMap(\S)$ is globally CB if and only if $\S$ is the Loch Ness monster surface. 
\end{namedtheorem}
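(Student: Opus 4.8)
The plan is to prove the two directions separately, leaning on \Cref{lem:threeEnds} and \Cref{prop:nondisp} for the forward direction and on the structure theory of $\PMap$ of the Loch Ness monster surface for the reverse direction.

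For the forward direction, suppose $\PMap(\S)$ is globally CB. First I would rule out surfaces with $|E(\S)| \ge 3$: by \Cref{lem:threeEnds} such a surface contains a nondisplaceable subsurface with respect to $\PMap(\S)$, and then \Cref{prop:nondisp} says $\PMap(\S)$ is not globally CB, a contradiction. So $|E(\S)| \in \{1, 2\}$. Next I would handle the case $|E(\S)| = 2$. A surface with exactly two ends is, up to homeomorphism, determined by the genus and the topological type of the two ends; the key point is that the two ends cannot be exchanged by any element of $\PMap(\S)$ (purity!), so one can find a separating curve $\gamma$ splitting $\S$ into two one-ended pieces, and the subsurface cutting off a neighborhood of $\gamma$ (or a small finite-type subsurface containing $\gamma$) is nondisplaceable with respect to $\PMap(\S)$ by the same end-partition argument as in \Cref{lem:threeEnds} (any pure $f$ must send each end to itself, hence cannot move $\gamma$ off of itself without swapping the two complementary end sets). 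Again \Cref{prop:nondisp} gives a contradiction. This leaves $|E(\S)| = 1$. A one-ended infinite-type surface has either infinite genus — in which case it is the Loch Ness monster surface — or finite genus; but a one-ended finite-genus surface with empty boundary is finite-type (it is a once-punctured genus-$g$ surface), contradicting that $\S$ is infinite-type. Hence $\S$ is the Loch Ness monster surface.

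For the reverse direction, I need to show $\PMap(L)$ is globally CB, where $L$ is the Loch Ness monster surface. Here $E(L)$ is a single point accumulated by genus, so $\PMap(L) = \Mp(L)$. The end space $\{*\}$ is trivially self-similar: the only decomposition of a one-point space is the trivial one, and one can take $D = E(L)$ itself, which is clopen and homeomorphic to $(E(L), E_G(L))$. Since $L$ has infinite genus and self-similar (one-point) end space, \Cref{fact:MRgloballyCB} applies — and by \Cref{rmk:finTame}, $L$ is tame since $|E(L)| = 1 < \infty$ — so $\Mp(L) = \PMap(L)$ is globally CB.

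The main obstacle I anticipate is the $|E(\S)| = 2$ case: \Cref{lem:threeEnds} as stated requires three ends, so I need to adapt its proof to produce a nondisplaceable subsurface from just two ends, carefully using that $\PMap(\S)$ fixes ends pointwise so that a separating curve realizing the two-end partition cannot be displaced. The argument is genuinely easier than the three-end case (with only two complementary pieces the "which component contains the rest" bookkeeping is shorter), but it does need to be written out rather than merely cited, and one should be slightly careful that the relevant subsurface is finite-type and essential. Everything else is a direct appeal to results already in the excerpt.
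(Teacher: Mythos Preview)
Your forward-direction argument for $|E(\S)|\ge 3$ and the reverse direction for the Loch Ness monster are correct and match the paper. The genuine gap is in the two-ended case, specifically the ladder surface (both ends accumulated by genus). Your claimed nondisplaceable subsurface---an annulus or small finite-type neighborhood of a separating curve $\gamma$---is in fact displaceable: a handle shift $h\in\PMap(\S)$ carries $\gamma$ to a disjoint curve $\gamma'$ that still separates the same two ends, and the end-partition bookkeeping yields no contradiction (if $f(\gamma)\subset S_1$ you only deduce $f(S_1)\subset S_1$ and $f(S_2)\supset S_2$, which is perfectly consistent). More to the point, the ladder has \emph{no} nondisplaceable finite-type subsurface with respect to $\PMap(\S)$ at all, since a high enough power of a handle shift displaces any compact set; so \Cref{prop:nondisp} simply cannot be applied here, and your adaptation of \Cref{lem:threeEnds} cannot be made to work no matter how the subsurface is chosen.

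The paper splits the two-ended surfaces into cases. For the once-punctured Loch Ness monster it does use a nondisplaceable subsurface, but one \emph{containing the isolated puncture} rather than merely a neighborhood of a separating curve; that is what pins it down, since every mapping class must fix the unique planar end. For the ladder the paper abandons nondisplaceability entirely and invokes the flux map of Aramayona--Patel--Vlamis, a continuous surjection $\PMap(\S)\twoheadrightarrow\Z$; pulling back the translation action of $\Z$ on $\R$ gives a continuous isometric action of $\PMap(\S)$ with unbounded orbits, witnessing that $\PMap(\S)$ is not CB. You will need this or an equivalent substitute to close the ladder case.
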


\noindent\textit{Proof.}
By \Cref{prop:nondisp} and \Cref{lem:threeEnds}, if $|E(\S)| \ge 3$, then $\S$ has a nondisplaceable subsurface and therefore $\PMap(\S)$ is not CB.  
There are three infinite-type surfaces with $|E(\S)| \le 2$, namely the ladder surface, the once-punctured Loch Ness monster surface, and the Loch Ness monster surface, which we will consider case by case. 
\begin{itemize}[leftmargin=*]
\item \textbf{Case 1: $\S$ is the ladder surface.} For surfaces with $|E_G(\S)| \ge 2$, Aramayona, Patel and Vlamis \cite{aramayona2020first} define a \emph{flux map}, which is a continuous surjection $\PMap(\S) \twoheadrightarrow \Z$. Because the action of $\mathbb{Z}$ on $\mathbb{R}$ by translations is continuous and by isometries, but the orbits of $x \in \mathbb{R}$ are unbounded, $\mathbb{Z}$ is not CB. It follows that $\PMap(\S)$ is not CB either, by pulling back the action of $\mathbb{Z}$ on $\mathbb{R}$ to an action of $\PMap(\S)$ on $\mathbb{R}$.   

\item \textbf{Case 2: $\S$ is the once-punctured Loch Ness monster surface.} 
A subsurface containing the isolated puncture is nondisplaceable with respect to $\Mp(\S)$, and since $\S$ has two distinct ends of different type, $\Mp(\S) = \PMap(\S)$ in this case. Hence, $\S$ is not CB by \Cref{prop:nondisp}.

\item \textbf{Case 3: $\S$ is the Loch Ness monster surface.}
Because the Loch Ness monster has only one end, $\PMap(\S) = \Mp(\S)$ and, as before, the CB classification of $\Mp(\S)$ by Mann--Rafi applies.  Specifically, $|E(\S)| = 1$ means that the end space is trivially self-similar, so by \Cref{fact:MRgloballyCB}, $\Mp(\S)$ is CB.  \qed
\end{itemize}

\Cref{thm:mainthm} (a) is consistent with the results of Mann--Rafi.  In particular, when $|E(\S)| < \infty$, $\PMap(\S)$ has finite index in $\Mp(\S)$.  Because $\Mp(\S)$ is a Polish group and $\PMap(\S)$ is a clopen Polish subgroup, \Cref{fact:CEmbed} applies, and we 
can import Mann--Rafi's CB classification of $\Mp(\S)$ in \Cref{fact:MRCBgen} to $\PMap(\S)$.  As pointed out in \Cref{rmk:SSTelEndsInf}, the telescoping condition is only satisfied for surfaces with infinitely many ends, and self-similarity only occurs when there are infinitely many ends or exactly one end.  Therefore when $|E(\S)|< \infty$, the only surface with globally CB $\Mp(\S)$ is the unique infinite-type surface with one end, namely the Loch Ness Monster surface.

\subsection{Locally CB Classification}
We now turn our attention to classifying the surfaces for which $\PMap(\S)$ is locally CB.  For a finite subsurface $K \subset \Sigma$, let $$\mathcal{V}_K \dfn \set{f \in \PMap(S) \mid f_{{|K}} = \id}.$$ 
If $K = \emptyset$, then $\V_K = \PMap(\S)$.  

The proof of the proposition below follows the same argument as a result of Mann--Rafi for $\Mp(\S)$.

\begin{PROP}
[cf.  {\cite[Lemma 5.2]{mann2019large}}] 
    \label{lem:locallyCB}
    Let $K \subset \S$ be a finite-type (possibly empty) subsurface such that each component of $\S \setminus K$ is infinite-type. If $\S \setminus K$ contains a nondisplaceable subsurface with respect to $\PMap(\S)$, then $\V_K$ is not CB in $\PMap(\S)$.  If $\V_K$ is not CB in $\PMap(\S)$ for all such $K$, then $\PMap(\S)$ is not locally CB. 

\end{PROP}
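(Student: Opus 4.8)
The plan is to treat the two assertions separately, in both cases leaning on the length function constructed for \Cref{prop:nondisp}.

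\emph{First assertion.} Suppose $S \subset \S \setminus K$ is a finite-type subsurface that is nondisplaceable with respect to $\PMap(\S)$. I would rerun the proof of \Cref{prop:nondisp} verbatim with this $S$: setting $\mathcal{I} = \set{f(S) \mid f \in \PMap(\S)}$, fixing a filling multicurve $\mu_S$ on $S$, and putting $\mu_R = \pi_R(\mu_S)$ (nonempty for all $R \in \mathcal{I}$ because $S$ is nondisplaceable), one gets the continuous length function $\ell(\phi) = \max_{R \in \mathcal{I}} d_{\phi(R)}(\phi(\mu_R), \mu_{\phi(R)})$ on $\PMap(\S)$. The one new point is that the witnessing pseudo-Anosov may be taken inside $\V_K$: since $S$ is disjoint from $K$, a mapping class $\phi$ represented by a homeomorphism that restricts to a pseudo-Anosov on $S$ and to the identity on $\S \setminus S$ is pure and restricts to the identity on $K$, so $\phi^n \in \V_K$ for every $n$; and the inequality $\ell(\phi^n) \ge d_S(\phi^n(\gamma), \gamma) \to \infty$ from the proof of \Cref{prop:nondisp} shows $\ell$ is unbounded on $\V_K$. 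Then the argument of \Cref{ex:LenCB}, applied via \Cref{fact:RosCrit} to the subset $\V_K \subseteq \PMap(\S)$ with the identity neighborhood $\set{g \mid \ell(g) < 1}$, gives that $\V_K$ is not CB in $\PMap(\S)$. (If $S$ is too simple to support a pure pseudo-Anosov, first enlarge it within its infinite-type complementary component — any subsurface containing a nondisplaceable subsurface is itself nondisplaceable.)

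\emph{Second assertion.} Assume $\V_K$ is not CB in $\PMap(\S)$ for every finite-type $K$ all of whose complementary components are infinite-type. Since a subset of a CB subset of $\PMap(\S)$ is again CB, and the sets $\V_{K'}$ for $K'$ a finite-type subsurface form a neighborhood basis at the identity, it suffices to show $\V_{K'}$ is not CB in $\PMap(\S)$ for \emph{every} finite-type $K'$. Given such a $K'$, the complement $\S \setminus K'$ has finitely many components, at least one infinite-type (else $\S$ were finite-type); let $K \supseteq K'$ be obtained by absorbing the (finitely many) finite-type components of $\S \setminus K'$ into $K'$, so $K$ is still finite-type and every component of $\S \setminus K$ is infinite-type. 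By hypothesis $\V_K$ is not CB in $\PMap(\S)$; since $K' \subseteq K$ we have $\V_K \subseteq \V_{K'}$, and a CB set cannot contain a non-CB subset, so $\V_{K'}$ is not CB in $\PMap(\S)$ either. Hence $\PMap(\S)$ has no CB identity neighborhood and is therefore not locally CB.

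The only genuinely new ingredient beyond \Cref{prop:nondisp} is arranging the witnessing pseudo-Anosov to be simultaneously pure and an element of $\V_K$; this is exactly what the hypothesis $S \subset \S \setminus K$ provides, once one notes that $S$ may be fattened inside its (infinite-type) complementary component without destroying nondisplaceability. Everything else — the reduction in the second assertion to complements with only infinite-type components, and the monotonicity $\V_K \subseteq \V_{K'}$ when $K' \subseteq K$ — is routine bookkeeping.
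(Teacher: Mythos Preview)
Your proposal is correct and follows essentially the same argument as the paper: you rerun the length-function construction from \Cref{prop:nondisp}, observing that the witnessing pseudo-Anosov supported on $S \subset \S \setminus K$ lies in $\V_K$, and then reduce an arbitrary identity neighborhood $\V_{K'}$ to one of the special form by absorbing the finite-type complementary components. The paper phrases the second step with compact $C$ rather than finite-type $K'$, but the bookkeeping is identical; your parenthetical about fattening $S$ to support a pseudo-Anosov is a small extra care the paper leaves implicit.
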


\begin{proof}

First, let $S \subset \S \setminus K$ be a nondisplaceable subsurface with respect to $\PMap(\S)$.  
Recall in  \Cref{prop:nondisp} that $\PMap(\S)$ was shown to be non-CB when $S \subset \S$ is nondisplaceable by constructing an unbounded length function on $\PMap(\S)$.  
The same argument applies to construct an unbounded length function on $\V_K$. 
Specifically, consider a psuedo-Anosov $f$ supported on $S \subset \S \setminus K$ and extend $f$ to an element of $\PMap(\S)$ by extending via the identity on $\S \setminus S$.
Then $f \in \V_K$ and as in \Cref{prop:nondisp} the length function is unbounded on $\V_K$.

To see why $\PMap(\S)$ is not locally CB if all $\V_K$ are not CB in $\PMap(\S)$, first recall that $\set{\V_C \mid C \text{ compact}}$ is a neighborhood basis of the identity in the compact open topology.  For each compact subsurface $C$, there is a finite type subsurface $K$ that contains $C$ and whose complementary components $\S\setminus K$ are infinite-type. Specifically, take $K$ to be the union of $C$ and its finite-type complementary components.  So $\V_K \subset \V_C$.  It follows from the definition of CB that if $\V_K$ is not CB in $\PMap(\S)$, then $\V_C$ is not CB in $\PMap(\S)$. If all sets $\V_K$ are not CB in $\PMap(\S)$, then neither are any of the sets $\V_C$ in the neighborhood basis of the identity, and $\PMap(\S)$ has no CB neighborhood of the identity; in other words, $\PMap(\S)$ is not locally CB.  
\end{proof}

We will see that \Cref{lem:locallyCB} effectively narrows down the possible surfaces with a locally CB pure mapping class group to those with finitely many ends.  

\begin{namedtheorem}[\Cref{thm:mainthm} (b)]
Let $\S$ be an infinite-type surface.  Then $\PMap(\S)$ is locally CB if and only if $|E(\S)| < \infty$ and either 
    \begin{itemize}
        \item[\emph{(i)}] $|E_G(\S)| = 1$ and $|E(\S) \setminus E_G(\S)| = 0$ or, 
        \item[\emph{(ii)}] $|E_G(\S)| > 1$ and $|E(\S) \setminus E_G(\S)| \ge 0$;
    \end{itemize}  
    in other words, $\S$ is not a Loch Ness monster surface with (nonzero) punctures. 
\end{namedtheorem}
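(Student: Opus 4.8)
The proof splits according to whether $E(\S)$ is finite, and in the finite case it leverages that $\PMap(\S)$ is a clopen finite-index subgroup of $\Mp(\S)$ to reduce everything to Mann--Rafi's classification of $\Mp(\S)$.

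First, the case $|E(\S)|=\infty$: I would show $\PMap(\S)$ is not locally CB via \Cref{lem:locallyCB}. Let $K\subset\S$ be any finite-type subsurface all of whose complementary components are infinite-type. Since $\partial K$ is compact, $\S\setminus K$ has finitely many components, while at most finitely many ends of $\S$ are punctures of $K$; hence $E(\S\setminus K)$ is infinite and some component $\Sigma_0$ has $|E(\Sigma_0)|\ge 3$. Running the construction from the proof of \Cref{lem:threeEnds} on three ends of $\Sigma_0$, performed inside $\operatorname{int}(\Sigma_0)$, produces a compact subsurface $K'\subset\Sigma_0\subset\S\setminus K$ whose three complementary components in $\S$ (exactly one of which also contains $\S\setminus\Sigma_0$) have nonempty end spaces partitioning $E(\S)$; the displaceability argument of \Cref{lem:threeEnds} then applies verbatim --- it uses only that pure mapping classes fix ends pointwise --- to show $K'$ is nondisplaceable with respect to $\PMap(\S)$. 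By the first part of \Cref{lem:locallyCB} this makes $\V_K$ non-CB in $\PMap(\S)$, and since $K$ was arbitrary, the second part of \Cref{lem:locallyCB} shows $\PMap(\S)$ is not locally CB.

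Next, the case $|E(\S)|<\infty$. Then $\Homeo(E(\S),E_G(\S))$ is finite, so the defining short exact sequence exhibits $\PMap(\S)$ as a clopen finite-index Polish subgroup of $\Mp(\S)$; by \Cref{fact:CEmbed}, $\PMap(\S)$ is locally CB if and only if $\Mp(\S)$ is, so it suffices to decide which finite-end infinite-type surfaces have locally CB full mapping class group, using \Cref{fact:MRlocallyCB}. Such a surface has infinite genus, hence $|E_G(\S)|\ge 1$; moreover, since $E(\S)$ is finite, a complementary component is infinite-type precisely when it contains a genus end, so in \Cref{fact:MRlocallyCB} there can be no $\mathcal{P}$-pieces and every isolated planar end must be realized as a puncture of $K$. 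If $|E_G(\S)|=1$ with no punctures, then $\S$ is the Loch Ness monster surface and $\PMap(\S)=\Mp(\S)$ is globally CB by part~(a) of \Cref{thm:mainthm}, hence locally CB. If $|E_G(\S)|=k\ge 2$ (case (ii)), take $K$ to be a genus-$0$ subsurface with the isolated planar ends as punctures and with $k$ boundary curves cutting off the infinite-genus one-ended pieces $\Sigma_1,\dots,\Sigma_k$; then $\mathcal{A}=\{E(\Sigma_i)\}$ consists of singletons (self-similar end spaces) and conditions (1)--(3) of \Cref{fact:MRlocallyCB} hold --- (3) being the telescoping property of the ladder surface carried across $k$ genus ends --- so $\Mp(\S)$, and therefore $\PMap(\S)$, is locally CB.

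Finally, the excluded case: a Loch Ness monster surface $\S$ with $n\ge 1$ punctures, where I claim $\Mp(\S)$ (equivalently $\PMap(\S)$) is not locally CB. For any candidate $K$ as in \Cref{fact:MRlocallyCB}, every complementary component must contain a genus end, and there is only one, so $\S\setminus K$ is a single infinite-type component $\Sigma_1$; its end space is thus the only piece of the partition and must lie in $\mathcal{A}$, and since a nonempty finite self-similar end space is a single point, $E(\Sigma_1)=\{g\}$, all punctures are absorbed into $K$, and $\S\setminus\Sigma_1=K$ has finite genus. But condition (3), applied to $g\in\mathcal{M}(\{g\})$, requires every neighborhood $V$ of $g$ to admit $f_V\in\Mp(\S)$ with $f_V(V)\supseteq\Sigma_1$, i.e.\ $f_V(\S\setminus V)\subseteq K$; choosing $V$ small enough that $\S\setminus V$ has genus exceeding that of $K$ makes this impossible, so no valid $K$ exists. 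The main obstacle is exactly this case: it cannot be obtained from \Cref{lem:locallyCB} directly, because the complement of a punctured-disk neighborhood of a puncture contains no nondisplaceable subsurface (and the associated $\V_K$ is CB yet fails to be a neighborhood of the identity), so one must detour through the finite-index reduction to $\Mp(\S)$ and the genus-counting failure of Mann--Rafi's telescoping condition. A secondary point to verify carefully is that the \Cref{lem:threeEnds} construction, when carried out inside a complementary component $\Sigma_0$, still yields a subsurface nondisplaceable for the pure mapping class group of the whole surface.
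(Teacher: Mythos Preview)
Your proposal is correct and follows essentially the same route as the paper: \Cref{lem:locallyCB} together with \Cref{lem:threeEnds} for $|E(\S)|=\infty$, the finite-index reduction via \Cref{fact:CEmbed} to \Cref{fact:MRlocallyCB} for $|E(\S)|<\infty$, and the genus-counting obstruction to condition~(3) for the punctured Loch Ness monster. You are in fact more careful than the paper on one point it glosses over --- that the pair-of-pants $K'$ built inside a complementary component $\Sigma_0$ is nondisplaceable for $\PMap$ of the \emph{ambient} surface, not just for $\PMap(\Sigma_0)$ --- and your explicit statement of the bi-implication ``$\PMap(\S)$ locally CB $\Leftrightarrow$ $\Mp(\S)$ locally CB'' in the finite-end case is cleaner than the paper's one-directional remark.
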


\begin{proof}

First, consider when $|E(\S)| = \infty$. 
Let $K \subset \S$ be a finite-type subsurface (possibly empty) so that each connected component of $\S \setminus K$ has infinite-type.  
If $K = \emptyset$, then $\V_K = \PMap(\S)$ is not CB by \Cref{thm:mainthm} (a).  For any $K \ne \emptyset$, at least one of the connected components of $\S \setminus K$ must contain infinitely many ends, and by \Cref{lem:threeEnds} it must contain a nondisplaceable subsurface.  So, $\V_K$ is not CB in $\PMap(\S)$ for all possible $K$ and by  \Cref{lem:locallyCB}, $\PMap(\S)$ is not locally CB. 

Before proceeding, we remark that when $|E(\S)| < \infty$, we can determine if $\PMap(\S)$ is locally CB solely by establishing whether $\Mp(\S)$ is locally CB. 
To see why, let $\V$ be a CB neighborhood of the identity in $\Mp(\S)$.  The restriction of $\V$ to $\PMap(\S)$ is CB in $\PMap(\S)$ by \Cref{fact:CEmbed} since $|E(\S)|< \infty$. Furthermore, $\V$ restricts to an identity neighborhood in $\PMap(\S)$, so $\PMap(\S)$ is locally CB. 

Now when $|E_G(\S)| = 1$, and $|E(\S) \setminus E_G(\S)| = 0$, the surface $\S$ is the Loch Ness monster surface which is globally CB by \Cref{thm:mainthm} (a) and hence locally CB. On the other hand, when $|E_G(\S)| = 1$ and $\S$ has nonzero, but finitely many punctures, we can show that $\Mp(\S)$ is not locally CB by \Cref{fact:MRlocallyCB}.  Up to isotopy, there is only subsurface $K$ satisfying the hypothesis of \Cref{fact:MRlocallyCB}, namely the subsurface of genus 0 containing all finitely many punctures pictured in  \Cref{pic:finpunctures}.

\begin{figure}[ht!]
    
    \centering
    \begin{overpic}[width=5cm]{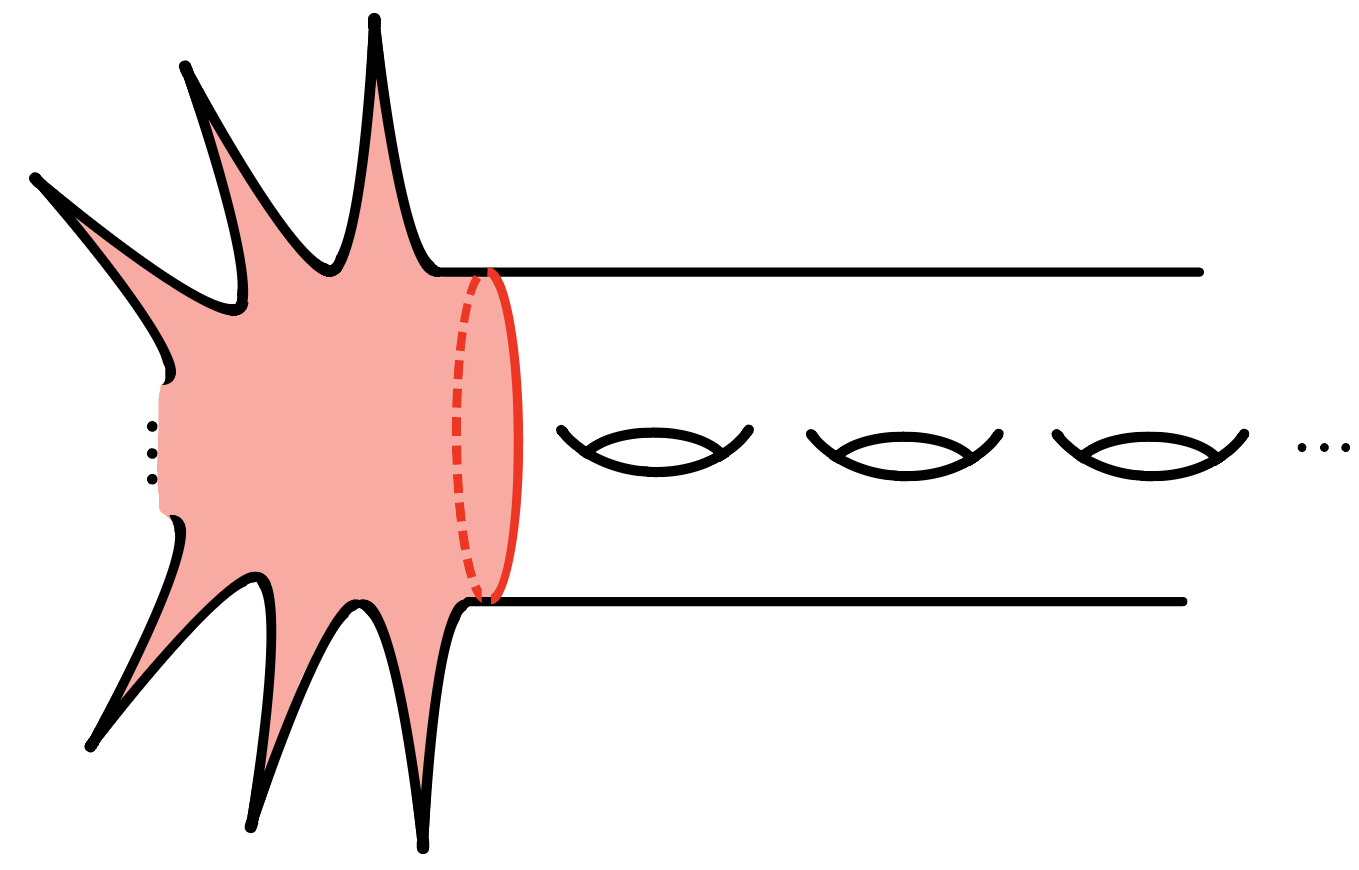}
    \put(20,30){\color{red} $K$}
    \end{overpic}
    \caption{The unique subsurface $K$ of the finitely punctured Loch Ness monster surface satisfying the hypothesis of \Cref{fact:MRlocallyCB} up to isotopy. 
    \label{pic:finpunctures}
    }
\end{figure}

Let $V$ be a neighborhood of the end accumulated by genus.  In general, $\S \setminus V$ has some nonzero finite genus, while $\S \setminus (\S \setminus K) = K$ has zero genus. So by the classification of surfaces, there is no $f \in \Mp(\S)$ such that $f(V) \supset \S \setminus K$, and so $\Mp(\S)$ is not locally CB.  
Hence, when $|E_G(\S)| = 1$ and $|E(\S) \setminus E_G(\S)| \ne 0$, $\PMap(\S)$ is not locally CB.  

Finally, we will show $\PMap(\S)$ is locally CB when $|E(\S)| < \infty$ and $|E_G(\S)| > 1$, by showing that $\Mp(\S)$ is locally CB in this case. Because $\S$ has finitely many ends, it has, at most, a finite number of punctures.  Let $K \subset \S$ be a finite-type subsurface containing all punctures of $\S$ (if any), such that all components of $\S \setminus K$ contain a single end accumulated by genus.  These self-similar singleton sets represent the $A$ sets in \Cref{fact:MRlocallyCB}. There are no $P$ sets for this surface. See \Cref{pic:finPunctMultEnds}. 

\begin{figure}[ht!]
    
    \centering
   \begin{overpic}[width=7cm]{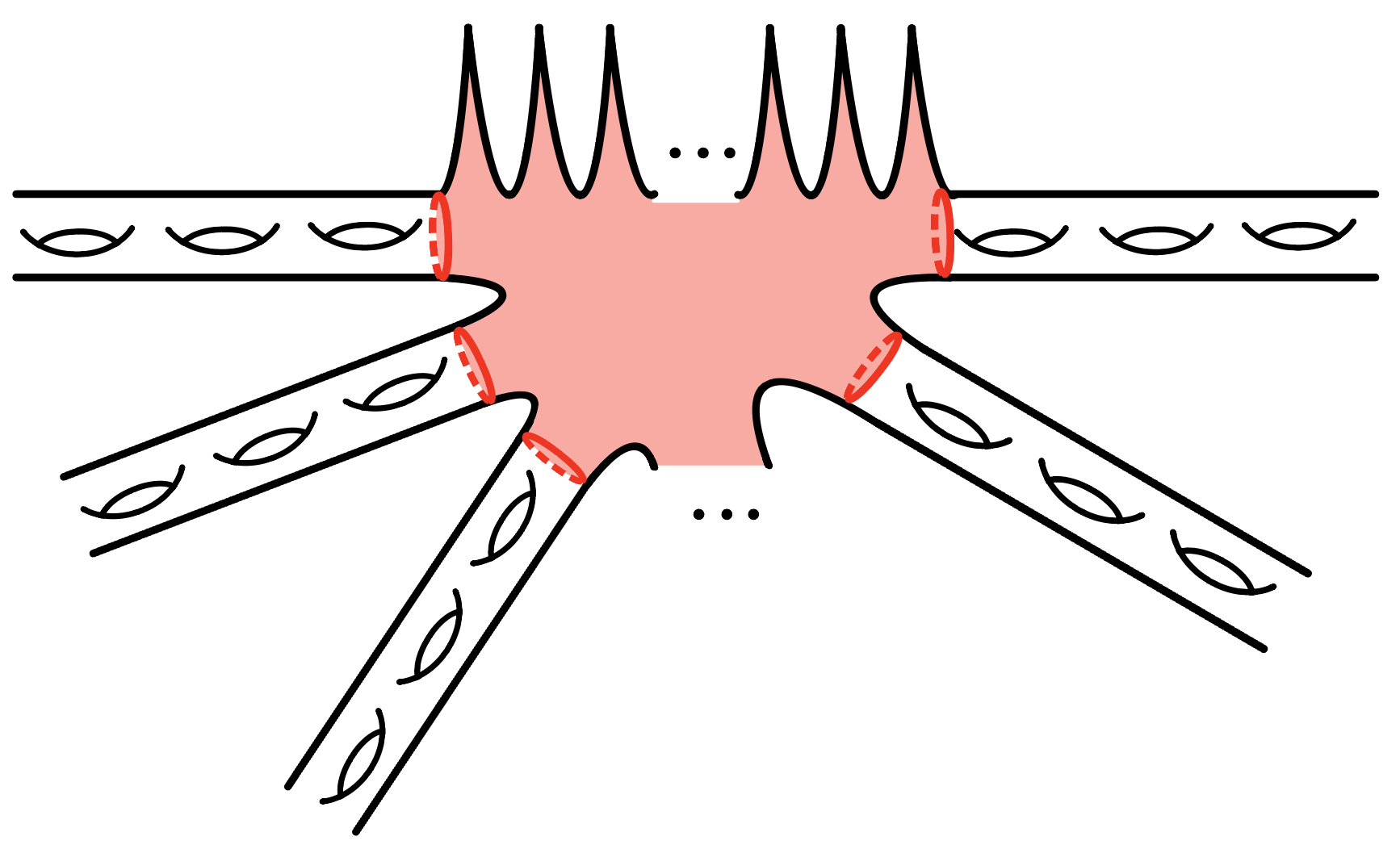}
        \put(47,38){\color{red} $K$}
   \end{overpic}
    \caption{A choice of $K$ that satisfies the hypothesis of \Cref{fact:MRlocallyCB}. 
    \label{pic:finPunctMultEnds}
    }
\end{figure}

We now check condition (3) in the statement of \Cref{fact:MRlocallyCB}. For any end accumulated by genus $x \in E_G(\S)$ and any neighborhood $V$ of $x$, an iterate of a handle shift $h \in \Mp(\S)$ between $x$ and some other end $y \ne x \in E_G(\S)$ will shift all of the genus in the component of $\S \setminus K$ not in $V$ into the component of $\S \setminus K$ containing $y$.  Thus, $h$ maps the neighborhood $V$ to the entire complementary component of $\S \setminus K$ containing the end $x$, and by \Cref{fact:MRlocallyCB} $\Mp(\S)$ is locally CB, hence $\PMap(\S)$ is as well.   
\end{proof}

Finally, we show that the surfaces with CB-generated pure mapping class group coincide with those with locally CB pure mapping class group, finishing the proof of \Cref{thm:mainthm}. 

\begin{namedtheorem}[\Cref{thm:mainthm} (c)]
    Let $\S$ be an infinite-type surface.  Then $\PMap(\S)$ is CB-generated if and only if $\PMap(\S)$ is locally CB. 
\end{namedtheorem}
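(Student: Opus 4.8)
One direction needs nothing new: if $\PMap(\S)$ is CB-generated then it is locally CB by implication (2) (Rosendal's theorem that every CB-generated Polish group is locally CB). So the plan is to prove the converse, assuming $\PMap(\S)$ is locally CB. By \Cref{thm:mainthm} (b) this forces $|E(\S)| < \infty$ and either $\S$ is the Loch Ness monster surface or $|E_G(\S)| \ge 2$. In the first subcase $\PMap(\S)$ is globally CB by \Cref{thm:mainthm} (a), hence CB-generated via implication (1) (the whole group is a CB generating set). So the real work is the subcase $|E(\S)| < \infty$, $|E_G(\S)| \ge 2$, and there I would proceed in two steps: first show $\Mp(\S)$ is CB-generated, then transfer this to the finite-index subgroup $\PMap(\S)$.

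For the first step I would invoke the Mann--Rafi classification. Since $|E(\S)| < \infty$, the surface is tame by \Cref{rmk:finTame}; $\Mp(\S)$ is locally CB by the argument already given in the proof of \Cref{thm:mainthm} (b) (which checks the hypotheses of \Cref{fact:MRlocallyCB}); and $\Mp(\S)$ is not globally CB, because $2 \le |E_G(\S)| \le |E(\S)| < \infty$ makes $E(\S)$ a finite end space with at least two ends, hence neither self-similar nor telescoping by \Cref{rmk:SSTelEndsInf}, so \Cref{fact:MRgloballyCB} fails. Thus \Cref{fact:MRCBgen} applies: $\Mp(\S)$ is CB-generated if and only if $E(\S)$ is finite-rank and not of limit type --- and both the infinite-rank and limit-type conditions require infinitely many ends, so for finite $E(\S)$ they hold automatically. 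Hence $\Mp(\S)$ is CB-generated.

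For the second step, observe that $|E(\S)| < \infty$ makes $\Homeo(E(\S),E_G(\S))$ finite, so $\PMap(\S)$ is a finite-index, clopen, Polish subgroup of $\Mp(\S)$; it then suffices to prove the general principle that a finite-index open subgroup $H$ of a CB-generated Polish group $G$ is CB-generated. To do this I would run Reidemeister--Schreier: fix a CB generating set $A$ of $G$, which after enlarging to $A \cup A^{-1} \cup \{1\}$ we may assume is symmetric (CB-ness is stable under inversion, left/right translation by a fixed element, and finite unions, all elementary consequences of \Cref{fact:RosCrit}), together with a finite right transversal $T \ni 1$ of $H$ in $G$; then $H$ is generated by the Schreier set $B := \{\, t a\,(\overline{ta})^{-1} : t \in T,\ a \in A \,\}$, where $\overline{g} \in T$ denotes the representative of the coset $Hg$. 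The point is that $B \subseteq H$ and $B \subseteq \bigcup_{s,t\in T} tAs^{-1}$, a finite union of two-sided translates of $A$; each $tAs^{-1}$ is CB in $G$, since for any continuous isometric action of $G$ on a metric space $X$ and any $x \in X$ one has $(tAs^{-1})\cdot x = t\cdot\big(A\cdot(s^{-1}\cdot x)\big)$, an isometric image of the bounded orbit $A\cdot(s^{-1}\cdot x)$, and a finite union of CB sets is CB; so $B$ is CB in $G$. Then the coarse-embedding half of \Cref{fact:CEmbed}, applied to $H \le G$ (here $\PMap(\S) \le \Mp(\S)$), upgrades this to $B$ being CB in $H$ itself. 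Thus $B$ is a CB generating set of $H$, and $H = \PMap(\S)$ is CB-generated, completing the proof.

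The step I expect to be the main obstacle is this last one. The excerpt supplies off-the-shelf transfer results between a Polish group and a finite-index open subgroup for CB-ness and (through identity neighborhoods) for local CB-ness, but not for CB-generation, so one must actually manufacture the generating set; the delicate point is that the transversal elements $t,s$ need not lie in $H$, so CB-ness of the Schreier set has to be verified in the ambient group $G$ and only afterwards pulled back into $H$ via \Cref{fact:CEmbed}. Everything else is either quoted (the Mann--Rafi classification, Rosendal's implications) or a short manipulation.
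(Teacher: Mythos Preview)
Your proof is correct but takes a genuinely different route from the paper. The paper does not pass through CB-generation of $\Mp(\S)$ at all; instead it reaches inside Mann--Rafi's \emph{proof} of \Cref{fact:MRCBgen} and extracts their explicit generating set $\V_K \cup D \cup H$ (Lickorish generators for $K$ and finitely many commuting handle shifts), observes that for finite end spaces the ``generalized shift maps'' in \cite[Lemma 6.21]{mann2019large} are absent, and checks that every remaining generator is pure, so this CB set already generates $\PMap(\S)$ directly. Your argument is more black-box: you invoke only the \emph{statement} of \Cref{fact:MRCBgen} to get CB-generation of $\Mp(\S)$, and then prove a general Reidemeister--Schreier transfer lemma (finite-index open subgroups of CB-generated Polish groups are CB-generated, using \Cref{fact:CEmbed} to pull the Schreier set's CB-ness from $G$ back to $H$). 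The paper's approach gives an explicit CB generating set for $\PMap(\S)$ but requires inspecting Mann--Rafi's construction; your approach is cleaner in that it isolates a reusable lemma and avoids opening up \cite{mann2019large}, at the cost of not producing concrete generators. Both are short, and your identification of the ``delicate point''---that Schreier generators involve transversal elements outside $H$, so CB-ness must first be checked in $G$---is exactly right.
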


\begin{proof}

    Recall that every CB-generated Polish group is locally CB {\cite[Theorem 1.2]{rosendal2021coarse}}, so we need only prove that if $\PMap(\S)$ is locally CB, then it is CB-generated. In the case where $\S$ is the Loch Ness Monster, \Cref{thm:mainthm}(a) implies that $\PMap(\S)$ is globally CB and hence CB-generated. Thus, it remains to consider the case where $\S$ has finitely many ends and $|E_G(\S)|> 1$.

    Since $\PMap(\S)$ is locally CB by \Cref{thm:mainthm} (b), we know that for $K\subset \S$ as in \Cref{fact:MRlocallyCB}, $\V_K$ is a CB neighborhood of the identity in $\PMap(\S)$.
    Now, let $D$ be a Lickorish generating set for the finite-type subsurface $K$, and $H$ a finite collection of pairwise commuting handle-shifts and chosen by Mann--Rafi in the proof of \Cref{fact:MRCBgen}. As both $D$ and $H$ are finite, the set $X = \V_K \cup D \cup H$ is CB in $\PMap(\S)$.  We claim that $X$ generates $\PMap(S)$. In \cite[Lemma 6.21]{mann2019large}, they show that the set $X$ together with a collection of ``generalized" shift maps generates a group $G$ that contains $\PMap(\S)$. However, in our case, $|E(\S)|< \infty$, so there are no generalized shift maps. Since all elements of $X$ are pure, we conclude that the CB set $X$ generates $\PMap(\S)$.
\end{proof}

\bibliography{refs}
\bibliographystyle{alpha}

\end{document}